\newtheorem{theorem}{Theorem}[section]
\newtheorem{lemma}[theorem]{Lemma}
\newtheorem{proposition}[theorem]{Proposition}
\theoremstyle{definition}
\newtheorem{remark}[theorem]{Remark}
\newcommand{\C}{\ensuremath{\mathbb{C}}}
\newcommand{\R}{\ensuremath{\mathbb{R}}}
\renewcommand{\H}{\ensuremath{\mathbb{H}}}
\renewcommand{\O}{\ensuremath{\mathbf{O}}}
\newcommand{\SO}{\ensuremath{\mathbf{SO}}}
\newcommand{\SU}{\ensuremath{\mathbf{SU}}}
\newcommand{\U}{\ensuremath{\mathbf{U}}}
\newcommand{\Sp}{\ensuremath{\mathbf{Sp}}}
\newcommand{\Spin}{\ensuremath{\mathbf{Spin}}}
\newcommand{\g}[1]{\ensuremath{\mathfrak{#1}}}
\newcommand{\cal}[1]{\ensuremath{\mathcal{#1}}}
\newcommand{\Cl}{\ensuremath{\mathrm{Cl}}}
\newcommand{\Aut}{\ensuremath{\mathrm{Aut}}}
\newcommand{\End}{\ensuremath{\mathrm{End}}}
\newcommand{\rank}{\ensuremath{\mathrm{rank}\;}}
\DeclareMathOperator{\Ad}{Ad}
\DeclareMathOperator{\ad}{ad}
\DeclareMathOperator{\Id}{Id}
\begin{document}
\title[Polar foliations on quaternionic projective spaces]{Polar foliations on quaternionic projective spaces}

\author[M.~Dom\'{\i}nguez-V\'{a}zquez]{Miguel Dom\'{\i}nguez-V\'{a}zquez}
\address{Instituto Nacional de Matem\'atica Pura e Aplicada (IMPA), Rio de Janeiro, Brazil.}
\email{mvazquez@impa.br}
\author[C.~Gorodski ]{Claudio Gorodski}
\address{Instituto de Matem\'atica e Estat\'istica, Universidade de S\~ao Paulo, S\~ao Paulo, Brazil.}
\email{gorodski@ime.usp.br}

\thanks{The first named author has been supported by a fellowship by IMPA (Brazil), and projects EM2014/009 and MTM2013-41335-P with FEDER funds (Spain).
The second named author has been partially supported by the
CNPq grant 303038/2013-6 and the FAPESP thematic project 2011/21362-2.}

\subjclass[2010]{Primary 53C12; Secondary 53C35, 57S15}


\begin{abstract}
We classify irreducible polar foliations of codimension $q$ on quaternionic projective spaces $\H P^n$, for all $(n,q)\neq(7,1)$. We prove that all irreducible polar foliations of any codimension (resp.\ of codimension one) on $\H P^n$ are homogeneous if and only if $n+1$ is a prime number (resp.\ $n$ is even or $n=1$). This shows the existence of inhomogeneous examples of codimension one and higher. 
\end{abstract}

\keywords{Polar foliation, singular Riemannian foliation, $s$-representation, symmetric space, FKM-foliation, homogeneous foliation, quaternionic projective space}

\maketitle

\section{Introduction}

A singular Riemannian foliation $\cal{F}$ on a complete Riemannian manifold $M$ is called a \emph{polar foliation} if through each point of $M$ passes a complete, isometrically immersed submanifold, called a section, that intersects all the leaves of $\cal{F}$ and always orthogonally. It turns out that sections are totally geodesic.
A polar foliation is said to be \emph{homogeneous} if it is the family of orbits of an isometric action on $M$; in this case, this action is called a~\emph{polar~action}. 

Polar foliations on nonnegatively curved space forms have been studied under the name of \emph{isoparametric foliations}, which have been almost completely classified after outstanding work by many mathematicians. We refer to the survey~\cite{Th:milan} and to the recent papers~\cite{Chi,Mi:annals} for more information. The question that remains open is to decide
whether a given codimension one polar foliation on the unit sphere $S^{31}$
must be either homogeneous or one of the known inhomogeneous examples
(see also~\cite{Mi:erratum,Siffert} for the case of $S^{13}$, in which there
is some ongoing discussion). 
These inhomogeneous examples, the so-called FKM-foliations, were constructed by Ferus, Karcher and M\"unzner in~\cite{FKM}. We note that the homogeneous polar foliations are well understood. In particular, by Dadok's seminal work~\cite{Dadok}, homogeneous polar foliations on spheres are induced by $s$-representations, that is, by the isotropy representations of symmetric spaces. Moreover, a deep result by Thorbergsson~\cite{Th:annals} states that irreducible polar foliations of codimension at least two on spheres are homogeneous.

The first investigations of not necessarily homogeneous polar foliations on concrete spaces of nonconstant curvature were developed by Lytchak~\cite{Lytchak:GAFA} and the first named author~\cite{Dom:tams}. Lytchak's paper deals with simply connected symmetric spaces of compact type and rank higher than one. He proves that polar foliations of codimension at least three on these spaces must be hyperpolar (i.e.\ polar with flat sections) which, combined with a result by 
Christ (\cite{Christ}; see also~\cite{GH}), implies the homogeneity of all such foliations. Meanwhile, the second paper~\cite{Dom:tams} deals with polar foliations on a family of rank one symmetric spaces, namely, on complex projective spaces. The main result is an almost complete classification of polar foliations on these spaces. Surprisingly enough, from this classification follows the existence of many inhomogeneous irreducible polar foliations, even in codimension higher than one, which reveals a sharp contrast with Thorbergsson's and Lytchak's results.

In this paper we continue the investigation initiated in~\cite{Dom:tams} of polar foliations on rank one symmetric spaces of compact type, and we derive an almost complete classification on quaternionic projective spaces $\H P^n$. This classification is complete in the case of irreducible foliations of codimension higher than one. Here by irreducible we mean that no proper totally geodesic quaternionic submanifold $\H P^k$, $k=0,\dots,n-1$, of $\H P^n$ is a union of leaves of the foliation. 

Any linear quaternionic structure $\g q$ on a Euclidean space $V=\R^{4n+4}$ determines a Hopf fibration $\pi_\g{q}\colon S^{4n+3}\to\H P^n$ from the unit sphere $S^{4n+3}$ of $V$ onto the quaternionic projective space $\H P^n$. We say that $\g q$ \emph{preserves} a singular Riemannian foliation $\cal{F}$ on the sphere $S^{4n+3}$ if each leaf of $\cal{F}$ is foliated by fibers of $\pi_\g{q}$. In such a case there is an induced singular Riemannian foliation $\pi_{\mathfrak q}(\mathcal F)$ on $\H P^n$, and indeed all singular Riemannian foliations of 
$\H P^n$ arise in this way.
Our classification amounts to listing the polar foliations $\cal{F}$ on spheres $S^{4n+3}$ that are preserved by some quaternionic structure, and then determining all congruence classes of quaternionic structures preserving $\cal{F}$. We denote by $N_\cal{S}(\cal{F})$ the number of congruence classes of polar foliations on $\H P^n$ that can be obtained as $\pi_\g{q}(\cal{F})$, for some polar foliation $\cal{F}$ on $S^{4n+3}$ and some quaternionic structure $\g q$ preserving $\cal{F}$. The explicit description is given in Section~\ref{sec:classification}.

In the case of codimension higher than one, each irreducible polar foliation on a sphere must be homogeneous and, more specifically, the orbit foliation of the isotropy representation of an irreducible symmetric space $G/K$, restricted to the unit sphere of $T_{[K]}G/K$. We denote this polar foliation by $\cal{F}_{G/K}$. Thus, we have the following result.

\begin{theorem}\label{th:main1}
For each symmetric space $G/K$ of rank at least two in Table~\ref{table:main1}, there are, up to congruence in $\H P^n$, exactly $N_\cal{S}(\cal{F}_{G/K})$ polar foliations on~$\H P^n$ whose pull-back under the Hopf map gives a foliation congruent to $\cal{F}_{G/K}$.

Conversely, let $\cal{G}$ be an irreducible polar foliation of codimension greater than one on $\H P^n$. Then $\cal{G}$ is the projection of $\cal{F}_{G/K}$ under the Hopf map associated with $\g q$, where $G/K$ is one of the symmetric spaces listed in Table~\ref{table:main1} and $\g{q}$ is a quaternionic structure on $T_{[K]}G/K$ preserving $\cal{F}_{G/K}$.

Moreover, if $G/K$ is an irreducible quaternionic-K\"ahler symmetric space, exactly one of the $N_\cal{S}(\cal{F}_{G/K})$ foliations that pull back to a foliation congruent to $\cal{F}_{G/K}$ is homogeneous. If $G/K$ is not quaternionic-K\"ahler, none of the $N_\cal{S}(\cal{F}_{G/K})$ foliations is homogeneous. 
\end{theorem}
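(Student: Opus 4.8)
The plan is to prove Theorem~\ref{th:main1} by translating the classification of polar foliations on $\H P^n$ into a problem about quaternionic structures on the Euclidean space $T_{[K]}G/K$ that are compatible with the $s$-representation foliation $\cal{F}_{G/K}$. Since Thorbergsson's theorem guarantees that every irreducible polar foliation of codimension $\geq 2$ on the sphere $S^{4n+3}$ is homogeneous and equal to some $\cal{F}_{G/K}$, the substance of the first two paragraphs of the statement reduces to: (i) determining for which symmetric spaces $G/K$ there exists \emph{any} quaternionic structure $\g q$ on $V=T_{[K]}G/K$ preserving $\cal{F}_{G/K}$, which yields Table~\ref{table:main1}; and (ii) counting the congruence classes, i.e.\ computing $N_\cal{S}(\cal{F}_{G/K})$. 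For (i), I would observe that $\g q$ preserves $\cal{F}_{G/K}$ precisely when the three complex structures generating $\g q$ act on $V$ so that each leaf is invariant under the corresponding Hopf circles; concretely, since the leaves are $K$-orbits, the condition is that the quaternionic structure normalizes (or interacts appropriately with) the image of the isotropy representation inside $\SO(V)$. I expect this to be analyzable case by case using the explicit descriptions of $s$-representations and their normalizers in $\SO(V)$.

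For the counting in (ii), the approach is to identify the quaternionic structures preserving $\cal{F}_{G/K}$ as a homogeneous space: the group of isometries of $\H P^n$ normalizing the foliation acts on the set of preserving quaternionic structures, and $N_\cal{S}(\cal{F}_{G/K})$ is the number of orbits of this action. The main tool will be the interplay between the normalizer of $K$ in $\SO(V)$ (which governs congruences of $\cal{F}_{G/K}$ on the sphere) and the subgroup $\Sp(1)\cdot\Sp(n+1)$ determining the Hopf fibration, so that two quaternionic structures give congruent foliations on $\H P^n$ if and only if they are related by an element normalizing both $K$ and the foliation. I would compute these normalizers explicitly for each entry of the table, which is where the bulk of the case-by-case work lives.

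The decisive part of the statement — and the one I expect to be the main obstacle — is the homogeneity dichotomy in the last paragraph: exactly one projected foliation is homogeneous when $G/K$ is quaternionic-K\"ahler, and none is homogeneous otherwise. The natural strategy is to use the following principle: a polar foliation $\pi_\g q(\cal{F}_{G/K})$ on $\H P^n$ is homogeneous if and only if the quaternionic structure $\g q$ is, in a suitable sense, \emph{aligned} with the symmetric-space structure, i.e.\ the circle action generating the Hopf fibration descends to an isometric action whose orbits are the leaves. For quaternionic-K\"ahler symmetric spaces the quaternionic structure of the target $\H P^n$ can be matched with the canonical quaternionic-K\"ahler structure on the tangent space, singling out one distinguished (homogeneous) choice of $\g q$ among the $N_\cal{S}(\cal{F}_{G/K})$ classes; for the non-quaternionic-K\"ahler cases, no such compatible structure exists, forcing every projection to be inhomogeneous.

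I would establish this dichotomy by invoking a criterion for homogeneity of $\pi_\g q(\cal{F})$ in terms of whether $\g q$ lies in the normalizer of the connected isometry group realizing $\cal{F}_{G/K}$ as an orbit foliation, and then checking, symmetric space by symmetric space, whether such a normalizing quaternionic structure exists and is unique up to the congruences counted above. The hard part will be the non-homogeneity assertions: ruling out homogeneity requires showing that \emph{no} isometric action on $\H P^n$ produces the projected foliation, which is a stronger statement than merely exhibiting one inhomogeneous $\g q$. For this I anticipate needing an obstruction — likely a comparison of the slice representations or of the infinitesimal data (second fundamental forms of the leaves) of $\pi_\g q(\cal{F}_{G/K})$ against those forced on any homogeneous polar foliation on $\H P^n$ — to derive a contradiction whenever $G/K$ fails to be quaternionic-K\"ahler. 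The quaternionic-K\"ahler condition is exactly what makes the isotropy group contain the relevant $\Sp(1)$ factor compatible with $\g q$, which is why it is the precise borderline in the statement.
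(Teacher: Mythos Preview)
Your overall architecture is right---reduce via Thorbergsson to $s$-representations, then classify preserving quaternionic structures up to congruence, then decide homogeneity---but two of your three main mechanisms are off, and without them the argument does not close.

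First, the condition for a quaternionic structure $\g q$ to preserve $\cal F_{G/K}$ is not that $\g q$ \emph{normalizes} the image of the isotropy representation; it is that $\g q$ is \emph{contained in} it. Concretely (Proposition~\ref{prop:rho}), $\g q$ preserves $\cal F_{G/K}$ if and only if $\g q=\rho_*(\g s)$ for some $\g{su}_2$-subalgebra $\g s\subset\g k$, and (Proposition~\ref{prop:sum_of_standard}) this happens exactly when $\g s$ contains an element mapping to a complex structure on $V$, equivalently when $\rho_*|_{\g s}$ decomposes as a sum of standard $4$-dimensional $\g{su}_2$-representations. So the search is over $\g{su}_2$-subalgebras of $\g k$, not over normalizers in $\SO(V)$.

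Second, and more seriously, your counting of congruence classes lacks the paper's key reduction. The paper builds an \emph{injective} map $\iota\colon \cal S_{\bar C}/\!\sim\,\to\,\cal J\cap\bar C/\!\sim$ from quaternionic structures to complex structures preserving $\cal F_{G/K}$ (Proposition~\ref{prop:injective}), using Dynkin's theorem that two $\g{su}_2$-subalgebras of $\g k$ sharing a nonzero semisimple element are conjugate. Since $\cal J/\!\sim$ was already computed in \cite{Dom:tams}, this reduces the problem to checking, for each known complex-structure class $[H]$, whether $H$ lies in some $\g{su}_2$-subalgebra of $\g k$. That is a finite, concrete check via the weight multiset of $H$ (it must be symmetric about $0$). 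Without this bridge to \cite{Dom:tams} and Dynkin, your proposed normalizer computation does not by itself bound $N_{\cal S}$.

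Third, for the homogeneity dichotomy your intuition about the distinguished $\Sp_1$-factor in the quaternionic-K\"ahler case is exactly right, but your proposed obstruction (slice representations or second fundamental forms) is not how the paper proceeds and would be hard to make effective. The paper instead proves (Lemma~\ref{lemma:normalizer}) that $\pi_\g q(\cal F_{G/K})$ is homogeneous if and only if $N_K^0(S)$ already acts orbit-equivalently to $K$ on $S(\g p)$, and then invokes the Eschenburg--Heintze classification \cite{EH:mz} of proper connected subgroups orbit-equivalent to an irreducible $s$-representation. Running through the short list of exceptions in \cite{EH:mz} and eliminating them forces $S$ to be normal in $K$, i.e.\ $G/K$ quaternionic-K\"ahler with $\g s$ the distinguished factor; this is Theorem~\ref{th:homogeneity}. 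That classification-based argument is what actually delivers the non-homogeneity half of the dichotomy.
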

\begin{table}[h]
\begin{tabular}{|c|c|c|}
\hline
$G/K$ & $N_\cal{S}$ & Condition
\\ \hline  
$\SU_{2p+q}/\mathbf{S}(\U_{2p}\times\U_q)$ & $2$ & $q$ even and $q\neq 2p$
\\
& $1$ & $q$ odd or $q=2p$
\\ \hline
$\SO_{4p+q}/\SO_{4p}\times\SO_q$ & $2$ & $q\equiv 0\,(\mathrm{mod}\,4)$ and $q\neq 4p$
\\
& $1$ & $q\not\equiv 0\,(\mathrm{mod}\,4)$ or $q=4p$
\\ \hline
$\Sp_{p+q}/\Sp_p\times\Sp_q$ & $2$ & $p\neq q$
\\
& $1$ & $p=q$
\\ \hline
\end{tabular}
\qquad
\begin{tabular}{|c|c|}
\hline
$G/K$ & $N_\cal{S}$ 
\\ \hline
$\mathbf{E}_6/\SU_6\cdot \SU_2$ & $1$
\\ \hline
$\mathbf{E}_6/\Spin_{10}\cdot \U_1$ & $1$
\\ \hline
$\mathbf{E}_7/\Spin_{12}\cdot \Sp_1$ & $2$
\\ \hline
$\mathbf{E}_8/(\Spin_{16}/\mathbb{Z}_2)$ & $1$
\\ \hline
$\mathbf{E}_8/\mathbf{E}_7\cdot\SU_2$ & $1$
\\ \hline
$\mathbf{F}_4/\Sp_3\cdot\SU_2$ & $1$
\\ \hline
$\mathbf{G}_2/\SU_2\cdot\SU_2$ & $1$
\\
\hline
\end{tabular}
\medskip
\caption{}\label{table:main1}
\end{table}

Investigating codimension one polar foliations on quaternionic projective spaces requires the determination of the quaternionic structures that preserve FKM-foliations, up to congruence of the projected foliations. In this paper we carry out this job for all FKM-foliations satisfying $m_+\leq m_-$ (see below for the explanation of this notation). According to the classification results in spheres, each codimension one polar foliation on $S^{4n+3}$, $n\neq 7$, is either a homogeneous foliation $\cal{F}_{G/K}$ for some rank $2$ symmetric space $G/K$, or an inhomogeneous FKM-foliation satisfying $m_+\leq m_-$. Thus, our study (summarized in Theorem~\ref{th:main2} below) yields the classification of codimension one polar foliations on 
$\H P^n$, for all $n\neq 7$.

In order to understand the condition $m_+\leq m_-$ and Theorem~\ref{th:main2} below, we need to introduce some known facts about FKM-foliations; we refer to~\S\ref{subsec:FKM} for more details. An FKM-foliation $\cal{F}_\cal{P}$ is defined in terms of a symmetric Clifford system $(P_0,\dots,P_m)$ on $\R^{2l}$, but it only depends on the $(m+1)$-dimensional vector space of symmetric matrices $\cal{P}=\mathrm{span}\{P_0,\dots,P_m\}$. The codimension one leaves of $\cal{F}_\cal{P}$ have $g=4$ distinct principal curvatures with multiplicities $(m_+,m_-)=(m, l-m-1)$. The Clifford system $(P_0,\dots,P_m)$ determines a Clifford module on $\R^{2l}$. Let $k$ be the number of its irreducible submodules. If $m\equiv 0\, (\mathrm{mod}\,4)$, there are exactly two equivalence classes of irreducible Clifford modules; in this case, let $k_+$ and $k_-$ be the number of each one of these two classes appearing in the decomposition into irreducible submodules.

\begin{theorem}\label{th:main2}
Let $\cal{F}_\cal{P}$ be an FKM-foliation on $S^{4n+3}$ with $\dim \cal{P}=m+1$ and $2\leq m_+\leq m_-$. Then, up to congruence in $\H P^n$, there are exactly $N_\cal{S}(\cal{F}_\cal{P})\geq 1$ polar foliations on $\H P^n$ that pull back under the Hopf map to a foliation congruent to $\cal{F}_\cal{P}$, where:
\begin{itemize}
\item $N_\cal{S}(\cal{F}_\cal{P})=2$ if $m\equiv 0\, (\mathrm{mod}\,8)$ with both $k_+, k_-\equiv 0\, (\mathrm{mod}\,4)$, or if $m\equiv 1,7\, (\mathrm{mod}\,8)$ with $k\equiv 0\, (\mathrm{mod}\,4)$, or if $m\equiv 2,6\, (\mathrm{mod}\,8)$ with $k$ even, or if $m\equiv 3,4,5\, (\mathrm{mod}\,8)$;
\item $N_\cal{S}(\cal{F}_\cal{P})=1$, otherwise. 
\end{itemize} 

Conversely, let $\cal{G}$ be a polar foliation of codimension one on $\H P^n$. Then $\cal{G}$ is the projection of a polar foliation $\cal{F}$ on $S^{4n+3}$ under the Hopf map associated with $\g q$, where $\g q$ is a quaternionic structure on $\R^{4n+4}$ preserving $\cal{F}$, and:
\begin{itemize}
\item $\cal{F}=\cal{F}_\cal{P}$ is an FKM-foliation satisfying $2\leq m_+\leq m_-$; or
\item $\cal{F}=\cal{F}_{G/K}$ for some symmetric space $G/K$ of rank $2$ in Table~\ref{table:main1}; or
\item $\cal{F}$ is an inhomogeneous polar foliation of codimension one on $S^{31}$ whose hypersurfaces have $g=4$ distinct principal curvatures with multiplicities $(7,8)$.
\end{itemize}
Moreover, if $\cal{F}$ is not homogeneous, then $\cal{G}$ is not homogeneous either.
\end{theorem}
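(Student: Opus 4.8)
The plan is to translate the statement into a counting problem about quaternionic structures via the Hopf-map dictionary set up above. A codimension-one polar foliation $\cal{G}$ on $\H P^n$ pulling back to $\cal{F}_\cal{P}$ corresponds to a quaternionic structure $\g q$ on $\R^{4n+4}=\R^{2l}$ that preserves $\cal{F}_\cal{P}$, and two preserving structures $\g q,\g q'$ give congruent foliations on $\H P^n$ exactly when they are conjugate, up to the inner freedom of the $\Sp_1$ they generate, by an element of the group $N$ of isometries of $S^{4n+3}$ carrying $\cal{F}_\cal{P}$ to itself. Thus $N_\cal{S}(\cal{F}_\cal{P})$ equals the number of $N$-conjugacy classes of quaternionic structures preserving $\cal{F}_\cal{P}$, and the problem splits into (i) identifying these structures and (ii) counting their classes.

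First I would make preservation explicit. Realizing $\cal{F}_\cal{P}$ as the foliation by the level sets and focal manifolds of the Cartan--M\"unzner polynomial $F(x)=|x|^4-2\sum_{i=0}^m\langle P_ix,x\rangle^2$, the fibers of $\pi_{\g q}$ lie inside the leaves precisely when the connected subgroup $\Sp_1\subset\SO(2l)$ generated by $\g q=(J_1,J_2,J_3)$ fixes $F$. Differentiating $F(e^{tJ_\alpha}x)$ at $t=0$, and using that $\ad(J_\alpha)$ is skew-adjoint on symmetric endomorphisms for the trace form, a short computation reduces preservation to the requirement that each $J_\alpha$ \emph{normalizes} $\cal{P}$ inside $\g{so}(2l)$. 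I would then describe this normalizer as $N_{\g{so}(2l)}(\cal{P})=\g h\oplus\g c$, where $\g h=\mathrm{span}\{P_iP_j\}\cong\g{so}(m{+}1)$ is the Clifford rotation algebra and $\g c$ is the skew-symmetric part of the commutant $C=\{A\in\End(\R^{2l}):[A,\cal{P}]=0\}$ of the Clifford system. A preserving $\g{sp}_1$-triple may then lie in $\g c$, inside $\g h$, or diagonally, and deciding which of these occur, and when two of them are $N$-conjugate, is the heart of the matter.

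The counting in step (ii) becomes linear algebra over the commutant. By the periodicity of Clifford modules, $C$ is a matrix algebra over $\R$, $\C$ or $\H$ --- a product of two such, indexed by $k_+$ and $k_-$, when $m\equiv0\,(\mathrm{mod}\,4)$ --- whose size is controlled by the number $k$ of irreducible submodules, and whose unit group lies in $N$. A preserving quaternionic structure is then an $\H$-structure compatible with $C$, and its $N$-conjugacy class is governed by how it is distributed across the irreducible summands. The value $N_\cal{S}\in\{1,2\}$ appears because, precisely in the listed residue classes of $m\,(\mathrm{mod}\,8)$ and parities of $k$, $k_+$, $k_-$, a preserving structure and its ``opposite'' (obtained by reversing the induced orientation on the summands) fail to be $N$-conjugate and so contribute a second class; otherwise the two coincide and $N_\cal{S}=1$. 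I expect this conjugacy bookkeeping to be the main obstacle, as it is exactly where every case distinction of the statement is generated, and it requires a careful analysis of the orbits of the classical unit group of $C$ acting on its quaternionic structures.

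Finally, for the converse and the homogeneity claim I would argue as follows. Since every singular Riemannian foliation of $\H P^n$ has the form $\pi_{\g q}(\cal{F})$ for some foliation $\cal{F}$ on $S^{4n+3}$ preserved by $\g q$, a codimension-one polar foliation $\cal{G}$ pulls back to a codimension-one polar foliation $\cal{F}$; by the classification on spheres, valid for $n\neq7$, this $\cal{F}$ is either a homogeneous $\cal{F}_{G/K}$ with $G/K$ of rank $2$, or an FKM-foliation --- which, after interchanging its two focal manifolds, may be normalized so that $2\le m_+\le m_-$ --- together with the exceptional inhomogeneous family on $S^{31}$ with multiplicities $(7,8)$, giving the three listed alternatives. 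For the last assertion, the isometry group of $\H P^n$ lifts to isometries of $S^{4n+3}$ normalizing the Hopf $\Sp_1$ (concretely, $\Sp_{n+1}$ acting on the sphere and commuting with the Hopf action); hence, if $\cal{G}$ were the orbit foliation of a group $G$, then $\cal{F}=\pi_{\g q}^{-1}(\cal{G})$ would be the orbit foliation of the lift of $G$ together with $\Sp_1$, and therefore homogeneous. Taking the contrapositive, whenever $\cal{F}$ is not homogeneous neither is $\cal{G}$.
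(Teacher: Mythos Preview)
Your overall framework is right and coincides with the paper's: the Lie algebra of the maximal connected group preserving the leaves of $\cal{F}_\cal{P}$ is exactly your $\g h\oplus\g c\cong\g{so}_{m+1}\oplus\g c$, and quaternionic structures are $\g{su}_2$-subalgebras there acting on $\R^{2l}$ by a sum of standard representations. Where your outline diverges from the paper is in the counting step. Your plan is to classify preserving $\g{sp}_1$-triples directly and then sort out $N$-conjugacy by an ``opposite structure'' argument; you yourself flag this as the hard part, and as written it is not a proof --- it asserts the case distinctions rather than deriving them. The paper avoids this bookkeeping entirely by using the injective map $\iota\colon\cal{S}/\!\sim\to\cal{J}/\!\sim$ from Section~\ref{sec:quaternionic_structures}: since $\cal{J}/\!\sim$ (complex structures up to congruence) was already computed in~\cite{Dom:tams} and contains at most two classes, one only has to check, for each representative $H\in\cal{J}\cap\bar C$, whether $H$ lies in some $\g{su}_2$-subalgebra of $\g{k}$. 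This is a finite, concrete check (done in~\S\ref{subsec:FKM}) and it is what produces the listed conditions on $m\pmod8$ and on $k$, $k_\pm$. Your direct approach could be made to work, but the reduction to $\cal{J}$ is what makes the argument clean.

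In the converse, your sentence ``after interchanging its two focal manifolds, may be normalized so that $2\le m_+\le m_-$'' is not quite correct. Swapping the focal manifolds does not change the foliation, and an FKM-foliation with $m_+>m_-$ need not be congruent to one with $m_+\le m_-$. The paper handles this more carefully: FKM-foliations with $m_+=1$ are homogeneous, and among the finitely many with $m_+>m_-$ each is either homogeneous or congruent to one with $m_+\le m_-$, except for two examples with $(m_+,m_-)=(8,7)$, which fall into the $S^{31}$ exception. The paper also explicitly rules out $(g,m_1,m_2)=(6,2,2)$ on dimensional grounds ($13\not\equiv3\pmod4$), which you should mention. Your argument for the final homogeneity assertion is fine and is essentially what the paper does.
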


Theorems~\ref{th:main1} and~\ref{th:main2}, together with the explicit determination (obtained in Section~\ref{sec:classification}) of the quaternionic structures preserving each polar foliation on a sphere, give the classification of irreducible polar foliations of codimension~$q$ on quaternionic projective spaces $\H P^n$, with the only exception of $(n,q)=(7,1)$.

Similarly as in the case of complex projective spaces, our classification implies the existence of many inhomogeneous irreducible polar foliations on quaternionic projective spaces, even of codimension higher than one. Indeed, we prove some neat characterizations of those dimensions $n$ for which $\H P^n$ admits inhomogeneous examples. Intriguingly enough, these characterizations are completely analogous to the ones obtained in~\cite{Dom:tams} for the complex case, in spite of the fact that the classification in the present paper is quite different~from~that~in~\cite{Dom:tams}.
\begin{theorem}\label{th:intro_codim1}
Every polar foliation of codimension one on $\H P^n$ is homogeneous if and only if $n$ is even or $n=1$.
\end{theorem}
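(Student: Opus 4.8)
The plan is to deduce the statement from the classification in Theorem~\ref{th:main2}, translating the question into the existence of inhomogeneous examples and then controlling that existence through the parity of $l=2n+2$, where the ambient sphere is $S^{4n+3}=S^{2l-1}$. Writing any codimension one polar foliation on $\H P^n$ as $\cal G=\pi_{\g q}(\cal F)$ as in Theorem~\ref{th:main2}, inhomogeneity of $\cal G$ can only come from one of two sources: either $\cal F$ is already inhomogeneous (an inhomogeneous FKM-foliation with $2\le m_+\le m_-$, or, when $4n+3=31$, the exceptional $(7,8)$-foliation on $S^{31}$), in which case $\cal G$ is inhomogeneous by the last assertion of Theorem~\ref{th:main2}; or $\cal F$ is homogeneous but the preserving quaternionic structure destroys homogeneity upon projection, the phenomenon already recorded for $G/K$ not quaternionic-K\"ahler in Theorem~\ref{th:main1}. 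I would organise the proof by showing that for $n$ even or $n=1$ neither source is available, while for $n$ odd the first one always is.

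First I would dispose of $n=1$, where $\H P^1$ is isometric to $S^4$, on which every codimension one polar (that is, isoparametric) foliation is homogeneous for elementary reasons; note also that $S^{31}$ occurs only for $n=7$, which is odd, so the exceptional foliation never interferes with the even case. For $n$ odd with $n\ge3$ I would produce an explicit inhomogeneous $\cal F$: since $l=2n+2\equiv0\pmod4$, one may take the FKM-foliation $\cal F_\cal P$ associated with a symmetric Clifford system of $m+1=4$ matrices, whose module-dimension constraint $l=k\,\delta(3)=4k$ is met precisely for such $l$, giving multiplicities $(m_+,m_-)=(3,l-4)$ with $l-4\ge 4$. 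No homogeneous isoparametric hypersurface with $g=4$ has smaller multiplicity equal to $3$, so $\cal F_\cal P$ is inhomogeneous; as $N_\cal S(\cal F_\cal P)\ge1$ by Theorem~\ref{th:main2}, some quaternionic structure preserves it and the resulting $\cal G$ is inhomogeneous. The same construction, applied to $S^{31}$ (where it yields the FKM-foliation with multiplicities $(3,12)$), settles $n=7$ without having to resolve the open case $(n,q)=(7,1)$.

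For the converse I would assume $n$ even, so $l\equiv2\pmod4$, and show that every $\cal F$ occurring in Theorem~\ref{th:main2} is homogeneous and projects homogeneously. The parity is decisive: for an FKM-foliation with $2\le m_+\le m_-$ the constraint $l=k\,\delta(m)$ together with $\delta(m)\equiv0\pmod4$ for all $m\ge3$ forces $m=2$, whence $(m_+,m_-)=(2,2k-3)$ with $k=n+1$ odd; these are exactly the homogeneous foliations induced by the complex two-plane Grassmannian $\SU_{n+3}/\mathbf S(\U_2\times\U_{n+1})$, which is quaternionic-K\"ahler, and moreover $N_\cal S=1$ in this case. A parallel bookkeeping over Table~\ref{table:main1} shows that the only rank two symmetric spaces whose foliations are preserved by a quaternionic structure and land on even $n$ are again these complex Grassmannians, the remaining rank two families ($\SO_{4p+2}/\SO_{4p}\times\SO_2$ and $\Sp_{2+q}/\Sp_2\times\Sp_q$) having real isotropy dimension divisible by $8$ and hence occurring only for odd $n$. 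Thus for $n$ even the unique foliation is the one coming from a quaternionic-K\"ahler space, and invoking the quaternionic-K\"ahler mechanism of Theorem~\ref{th:main1} (the $K$-invariant quaternionic structure descends to a transitive-on-leaves action of a subgroup of $\Sp_{n+1}\cdot\Sp_1$ on $\H P^n$) together with $N_\cal S=1$ yields that $\cal G$ is homogeneous.

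I expect the main obstacle to be precisely this parity dichotomy and, within the even case, the verification that \emph{every} preserving quaternionic structure yields a homogeneous projection. The first point rests on the Radon--Hurwitz behaviour of $\delta(m)$ and on knowing which $g=4$ multiplicities are homogeneous, so that $l\equiv0\pmod4$ genuinely admits the inhomogeneous $(3,l-4)$ while $l\equiv2\pmod4$ admits only the homogeneous $m=2$ case; the second reduces, via the count $N_\cal S=1$, to exhibiting one homogeneous representative, which the quaternionic-K\"ahler structure supplies. Assembling the two directions then gives the stated equivalence.
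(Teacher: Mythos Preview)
Your overall strategy is sound, and your construction for odd $n\ge3$ is a genuinely different route from the paper's: the paper exhibits an inhomogeneous example by projecting the \emph{homogeneous} foliation $\cal F_{G/K}$ for $G/K=\SU_{n+3}/\mathbf S(\U_{n+1}\times\U_2)$ via the ``wrong'' quaternionic structure (the one not coming from the distinguished $\g{su}_2$-factor), and then appeals to the homogeneity criterion (Theorem~\ref{th:homogeneity}) to see that the projection is inhomogeneous. Your FKM example with $m=3$ is more elementary in that it bypasses Theorem~\ref{th:homogeneity} entirely---inhomogeneity on $\H P^n$ follows directly from inhomogeneity on the sphere via the last clause of Theorem~\ref{th:main2}. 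The trade-off is that you need the (standard) fact that no homogeneous $g=4$ hypersurface has a multiplicity equal to~$3$.

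There is, however, a gap in your treatment of the even case. Your bookkeeping over Table~\ref{table:main1} only covers \emph{irreducible} rank two symmetric spaces, and your conclusion that ``for $n$ even the unique foliation is the one coming from a quaternionic-K\"ahler space'' is false: the reducible rank two pairs $(G,K)=(\SO_{4r+1}\times\SO_{4s+1},\,\SO_{4r}\times\SO_{4s})$ with $r+s=n+1$ also contribute codimension one polar foliations on $\H P^n$ (these are the $g=2$ isoparametric foliations, i.e.\ tubes around a totally geodesic $\H P^{r-1}$). The paper explicitly includes this case in its proof. These are easily seen to project homogeneously---for instance because $N_\cal S=1$ and the obvious diagonal $\Sp_1$ in $\Sp_r\times\Sp_s\subset\SO_{4r}\times\SO_{4s}$ already does the job, or directly because the projection is the orbit foliation of $\Sp_r\times\Sp_s$ on $\H P^n$---but you must say so. Note also that Theorem~\ref{th:main1}, which you invoke for the homogeneity of the Grassmannian projection, is stated only for irreducible $G/K$, so it does not cover the reducible case either; the cleanest fix is to argue both the Grassmannian and the reducible cases via Theorem~\ref{th:homogeneity} (or Lemma~\ref{lemma:normalizer}) directly, as the paper does.
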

\begin{theorem}\label{th:intro_primes}
Every irreducible polar foliation on $\H P^n$ is homogeneous if and only if $n+1$ is a prime number.
\end{theorem}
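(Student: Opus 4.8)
The plan is to treat separately the foliations of codimension one and those of codimension greater than one, and then to combine the two analyses using the arithmetic of $N:=n+1$. Since the Hopf fibration has $3$-dimensional fibers contained in the leaves, the projection $\pi_\g{q}$ preserves codimension, so a polar foliation of codimension $q$ on $\H P^n$ pulls back to one of codimension $q$ on $S^{4n+3}$. In particular, by Theorem~\ref{th:main1}, the irreducible polar foliations of codimension greater than one on $\H P^n$ are exactly the projections of the foliations $\cal F_{G/K}$ with $G/K$ a symmetric space of rank at least two in Table~\ref{table:main1} of dimension $\dim(G/K)=4N$, and for each such $G/K$ there are precisely $N_\cal{S}(\cal F_{G/K})$ of them. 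The core of the argument is the claim that every irreducible polar foliation of codimension greater than one on $\H P^n$ is homogeneous if and only if $N$ is prime; the codimension one case will then follow for free.

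To prove the core claim, I would use the homogeneity dichotomy of Theorem~\ref{th:main1}: among the $N_\cal{S}(\cal F_{G/K})$ projected foliations, exactly one is homogeneous when $G/K$ is quaternionic-K\"ahler and none is homogeneous otherwise. Hence an inhomogeneous example of codimension greater than one exists over $\H P^n$ precisely when Table~\ref{table:main1} contains an entry of dimension $4N$ that is either not quaternionic-K\"ahler, or is quaternionic-K\"ahler with $N_\cal{S}=2$. First suppose $N$ is composite and write $N=pq$ with $2\leq p\leq q$. The quaternionic Grassmannian $\Sp_{p+q}/\Sp_p\times\Sp_q$ then appears in Table~\ref{table:main1}, has dimension $4pq=4N$ and rank $\min(p,q)\geq 2$, and is not quaternionic-K\"ahler (both factors having rank at least two); by the dichotomy all of its $N_\cal{S}\geq 1$ projections are inhomogeneous, producing the desired inhomogeneous irreducible foliation of codimension greater than one.

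Conversely, suppose $N$ is prime. I would enumerate the entries of Table~\ref{table:main1} of dimension $4N$. In each of the three infinite families the dimension constraint reads $N=pq$ in terms of the two Grassmannian parameters, and the rank-at-least-two condition forces the second parameter to be at least two; since $N$ is prime this leaves only the factorization with first parameter $1$, namely the Wolf spaces $\SU_{N+2}/\mathbf{S}(\U_2\times\U_N)$ and $\SO_{N+4}/\SO_4\times\SO_N$, while the quaternionic Grassmannian family contributes nothing because $N$ admits no factorization into two factors $\geq 2$. Both surviving spaces are quaternionic-K\"ahler, and a direct reading of Table~\ref{table:main1} gives $N_\cal{S}=1$ in each case (the conditions yielding $N_\cal{S}=2$ fail because $N$ is odd, or collapse to the boundary case $q=2p$ resp.\ $q=4p$ when $N=2$). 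The only remaining possibilities are the exceptional spaces, which among the primes occur only for $N\in\{2,7\}$, namely $\mathbf{G}_2/\SU_2\cdot\SU_2$ and $\mathbf{F}_4/\Sp_3\cdot\SU_2$; both are quaternionic-K\"ahler with $N_\cal{S}=1$. Thus for prime $N$ every entry of dimension $4N$ is quaternionic-K\"ahler with $N_\cal{S}=1$, so it contributes a single, homogeneous foliation, and the core claim follows.

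Finally I would assemble the statement. If $N$ is prime then $N=2$ or $N$ is odd, so $n=N-1$ equals $1$ or is even; Theorem~\ref{th:intro_codim1} then gives that all codimension one polar foliations on $\H P^n$ are homogeneous, the core claim handles codimension greater than one, and the single-leaf foliation is trivially homogeneous---hence every irreducible polar foliation on $\H P^n$ is homogeneous. If instead $N$ is composite, the quaternionic Grassmannian construction above already exhibits an inhomogeneous irreducible foliation, so not every irreducible polar foliation is homogeneous. The main obstacle I anticipate is the bookkeeping in the prime case: one must match the dimension equation $\dim(G/K)=4N$ and the family-specific parity and divisibility conditions governing $N_\cal{S}$ in Table~\ref{table:main1} against the classification of quaternionic-K\"ahler (Wolf) spaces, and verify that primality of $N$ eliminates every potential inhomogeneous contributor. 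The infinite families are routine once primality collapses the factorizations; the only delicate point is checking the finitely many exceptional spaces by their dimensions.
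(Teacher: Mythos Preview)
Your approach is essentially the paper's: for composite $N$ you use $\Sp_{p+q}/\Sp_p\times\Sp_q$, and for prime $N$ you invoke Theorem~\ref{th:intro_codim1} for codimension one and enumerate Table~\ref{table:main1} for higher codimension. The final assembly is correct.

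However, your ``core claim'' as stated (an if-and-only-if) is false, because the codimension of $\pi_\g{q}(\cal F_{G/K})$ on $\H P^n$ equals $\rank(G/K)-1$, so ``codimension greater than one'' corresponds to rank at least \emph{three}, not two. For $N=4$ your example $\Sp_4/\Sp_2\times\Sp_2$ has rank~$2$ and hence yields a codimension-one foliation, while the only rank-$\geq 3$ entry in Table~\ref{table:main1} of dimension~$16$ is $\SO_8/\SO_4\times\SO_4$, which is quaternionic-K\"ahler with $N_\cal{S}=1$; thus every irreducible codimension-$>1$ foliation on $\H P^3$ is homogeneous even though $N=4$ is composite. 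This does not break your final argument, since there you only need the forward direction of the core claim (prime $N \Rightarrow$ homogeneity in codimension $>1$) together with the $\Sp$ example as an inhomogeneous irreducible foliation of \emph{some} codimension. But the intermediate claim should be dropped, and the rank-$2$ spaces $\SU_{N+2}/\mathbf{S}(\U_2\times\U_N)$ and $\mathbf{G}_2/\SU_2\cdot\SU_2$ that you list among the ``surviving spaces'' actually belong to the codimension-one discussion already handled by Theorem~\ref{th:intro_codim1}; the paper's enumeration for codimension $\geq 2$ and prime $N$ finds only $\SO_{N+4}/\SO_4\times\SO_N$ and $\mathbf{F}_4/\Sp_3\cdot\SU_2$.
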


We now give a quick overview of our arguments. First, the possibility of classifying polar foliations on quaternionic projective spaces arises from the fact that a singular Riemannian foliation on $\H P^n$ is polar if and only if its pull-back under the Hopf map is polar in $S^{4n+3}$. Thus, the good knowledge we nowadays have of polar foliations in spheres suggests that it is enough to check if each polar foliation on a sphere $S^{4n+3}$ can be the pull-back of a foliation on $\H P^n$. However, there can be noncongruent polar foliations on $\H P^n$ that pull back to congruent foliations on $S^{4n+3}$. Equivalently, given a fixed foliation $\cal{F}$ on $S^{4n+3}\subset \R^{4n+4}$, there can be different quaternionic structures $\g{q}$ on $\R^{4n+4}$ preserving $\cal{F}$ such that the projected foliations via the corresponding Hopf maps $\pi_\g{q}$ are not congruent in $\H P^n$. 

Determining the set $\cal{S}/\sim$ of all quaternionic structures preserving $\cal{F}$, up to congruence of the projected foliations, turns out to be a completely nontrivial job. 
Our task is then to develop a method to solve this problem. Given any singular Riemannian foliation $\cal{F}$ with closed leaves on $S^{4n+3}$, let $K$ be the maximal connected subgroup of $\SO_{4n+4}$ that leaves invariant each leaf of $\cal{F}$. We show that quaternionic structures preserving $\cal{F}$ are induced by those $\g{su}_2$-subalgebras of $\g{k}$ containing an element that is a complex structure on $\R^{4n+4}$. Combining this with the ideas in~\cite{Dom:tams}, we have a systematic approach to determine the moduli space $\cal{S}/\sim$. Then we apply this method to almost all known polar foliations on spheres, which is enough to obtain the classification stated in Theorems~\ref{th:main1} and~\ref{th:main2}.

Finally, we determine which projected polar foliations on $\H P^n$ are homogeneous. At this point, we revisit Podest\`a and Thorbergsson's classification of polar actions on $\H P^n$~\cite{PT:jdg} by making use of our results. The criterion of homogeneity thus obtained as Theorem~\ref{th:homogeneity} is fundamental to the proofs of Theorems~\ref{th:intro_codim1} and~\ref{th:intro_primes}.

This paper is organized as follows. In Section~\ref{sec:Hopf} we show that, roughly speaking, polar foliations have a good behaviour with respect to the Hopf map. Section~\ref{sec:quaternionic_structures} is devoted to the development of a method to investigate singular Riemannian foliations with closed leaves on quaternionic projective spaces. We apply this method to polar foliations in Section~\ref{sec:classification}, by determining the quaternionic structures that preserve homogeneous polar foliations on spheres (in~\S\ref{subsec:class_homogeneous}), and FKM-foliations with $m_+\leq m_-$ (in~\S\ref{subsec:FKM}). Based on this study, in Section~\ref{sec:proofs} we prove Theorems~\ref{th:main1} and~\ref{th:main2}. Finally, in Section~\ref{sec:homogeneity} we investigate the homogeneity of polar foliations on $\H P^n$ and prove Theorems~\ref{th:intro_codim1} and~\ref{th:intro_primes}.

\medskip

The authors would like to thank Marcos Alexandrino, Andreas Kollross,
and Alexander Lytchak for very useful comments.

\section{Behaviour with respect to the Hopf map}\label{sec:Hopf}
We briefly recall the construction of quaternionic projective space and its Hopf fibration; see~\cite[Chapter~3]{Besse} for details. In what follows, Lie algebras are denoted by gothic letters, and the unit sphere of a Euclidean space $V$ is denoted by $S(V)$. 

Let $V$ be the Euclidean space $\R^{4n+4}$. A $3$-dimensional subspace $\g q$ of $\g{so}(V)=\g{so}_{4n+4}$ is called a \emph{(linear) quaternionic structure} on $V$ if there are elements $J_1,J_2,J_3\in \g{q}$ such that $J_i^2=-\Id$ and $J_iJ_{i+1}=J_{i+2}$ (indices modulo $3$), for $i=1$, $2$, $3$. Note that $\g q$ is then a Lie subalgebra of $\g{so}(V)$ isomorphic to $\g{sp}_1\cong\g{su}_2$. We will denote by $Q$ the connected Lie subgroup of $\SO(V)$ with Lie algebra $\g q$. Clearly, $Q=\{a_0 \Id+a_1 J_1+a_2 J_2+a_3J_3: a_i\in\R, \,\sum_{i=0}^3a_i^2=1\}$.

Any quaternionic structure $\g q$ on $V$ induces a principal fiber bundle with total space the unit sphere $S(V)=S^{4n+3}$, base space the quaternionic projective space $\H P^n$, and structural group $Q\cong \Sp_1\cong \SU_2$. The corresponding fibration $\pi\colon S(V)\to \H P^n$ is called the Hopf map, and its fibers are the totally geodesic $3$-dimensional spheres given by the orbits of the isometric action of $Q$ on $S(V)$. The Fubini-Study metric on $\H P^n$ of constant quaternionic sectional curvature $4$ is the one that makes $\pi$ into a Riemannian submersion.

The following result is the starting point of our arguments.

\begin{proposition}\label{prop:Hopf}
Let $\mathcal{G}$ be a singular Riemannian foliation on $\H P^n$. Then $\cal{G}$ is a polar foliation on $\H P^n$ if and only if its pull-back foliation $\pi^{-1}\cal{G}$ is a polar foliation on $S(V)$. In this case, any section of $\cal{G}$ is a totally geodesic $\R P^k$ in $\H P^n$.
\end{proposition}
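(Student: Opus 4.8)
The plan is to work entirely through the Riemannian submersion $\pi\colon S(V)\to\H P^n$, whose fibers are the totally geodesic $3$-spheres $Q\cdot x$, with vertical space $\cal{V}_p=\g{q}\cdot p=\mathrm{span}\{J_1p,J_2p,J_3p\}$ and horizontal complement $\cal{H}_p$. First I record the structural facts I will use repeatedly. The leaves of $\pi^{-1}\cal{G}$ are $Q$-saturated, so $\cal{V}_p$ is tangent to the leaf through $p$; hence the normal space $\nu_p(\pi^{-1}\cal{G})$ is horizontal, $d\pi_p$ maps it isometrically onto $\nu_{\pi(p)}\cal{G}$, and the two foliations have the same codimension $q$. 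Note also that each $J_i$ preserves $\cal{H}$, since for $X\perp\{p,J_1p,J_2p,J_3p\}$ one checks $J_iX\perp\{p,J_1p,J_2p,J_3p\}$ using that the $J_i$ are orthogonal and satisfy the quaternion relations. Throughout I use the standard characterization that a singular Riemannian foliation with closed leaves is polar if and only if, on the regular part, its normal distribution is integrable with totally geodesic (autoparallel) integral manifolds, which then complete to global totally geodesic sections.

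The technical heart is an O'Neill computation that I expect to reduce everything to one algebraic condition. For horizontal $X,Y$ the O'Neill tensor of the quaternionic Hopf fibration is the curvature of the bundle,
\[
A_XY=\tfrac12[X,Y]^{\cal{V}}=-\sum_{i=1}^{3}\langle J_iX,Y\rangle\,\xi_i,\qquad \xi_i:=J_i\,(\cdot),
\]
up to the normalization fixed by the Fubini--Study metric. If $\hat\Sigma$ is a horizontal submanifold that horizontally covers a totally geodesic $\Sigma\subset\H P^n$, then for $X,Y$ tangent to $\hat\Sigma$ the horizontal part of $\nabla^{S}_XY$ is the lift of $\bar\nabla_{\bar X}\bar Y\in T\Sigma$ (tangent to $\hat\Sigma$), while the vertical part is $A_XY$, which is normal to $\hat\Sigma$; thus $\II^{\hat\Sigma}(X,Y)=A_XY$. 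Consequently $\hat\Sigma$ is totally geodesic if and only if its tangent space is \emph{totally real}, meaning $J_i\,T\hat\Sigma\perp T\hat\Sigma$ for $i=1,2,3$. Conversely, a horizontal totally geodesic great sphere $S(\cal{W})$ with $\cal{W}$ totally real satisfies $Q\cdot x\cap\cal{W}=\{\pm x\}$, so $\pi$ restricts to the double cover $S(\cal{W})=S^k\to\R P^k$; hence its image is a totally geodesic $\R P^k$. This is the mechanism producing the stated form of the sections.

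The reverse implication is then clean and also yields the last assertion. Assume $\pi^{-1}\cal{G}$ is polar on the round sphere. By the classical theory on spheres its sections are great subspheres $S(\cal{W})$, which are totally geodesic and, being orthogonal to the leaves and hence to the fibers $\cal{V}$ they contain, are horizontal; by the previous paragraph $\cal{W}$ is totally real and $\Sigma:=\pi(S(\cal{W}))$ is a totally geodesic $\R P^k$. Using that $d\pi$ identifies horizontal normal spaces with the normal spaces of $\cal{G}$, one checks that each such $\Sigma$ meets every leaf of $\cal{G}$ orthogonally and is complete and totally geodesic, so $\cal{G}$ is polar with sections of the claimed type.

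For the forward implication, assume $\cal{G}$ is polar with a (totally geodesic) section $\Sigma$ through $\bar p=\pi(p)$, and take $\hat\Sigma$ to be the horizontal lift of $\Sigma$ through $p$, equivalently the great sphere $S(\R p\oplus\tilde{\cal{W}})$ with $\tilde{\cal{W}}=\cal{H}_p\cap\nu_p(\pi^{-1}\cal{G})$ the lift of $T_{\bar p}\Sigma$. By the O'Neill reduction, $\hat\Sigma$ is a totally geodesic section of $\pi^{-1}\cal{G}$ — giving the desired polarity — precisely when $\tilde{\cal{W}}$ is totally real, i.e.\ $\langle J_iX,Y\rangle=0$ for all $X,Y\in\tilde{\cal{W}}$. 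I expect this totally-real property to be the main obstacle, since it fails for a general totally geodesic subspace: integrability of the normal distribution downstairs transfers upstairs only modulo the connection curvature $A$, whose vanishing on $\tilde{\cal{W}}$ is exactly the totally-real condition, and quaternionic or complex totally geodesic submanifolds (such as $\H P^k$ or $\C P^k$) are \emph{not} totally real. The intended route is to prove that, for a \emph{section} of a polar foliation, $J_i$ carries the horizontal normal space into the tangent space of the leaf: each $J_i\in Q$ preserves every leaf of $\pi^{-1}\cal{G}$ and covers the identity on $\H P^n$, so it preserves both the horizontal distribution and the normal spaces; combining this $Q$-invariance with the autoparallelism of the normal distribution and the explicit form of $A$ should force $J_i\tilde{\cal{W}}\perp\tilde{\cal{W}}$, equivalently ruling out non-totally-real totally geodesic submanifolds as sections because their horizontal lifts acquire vertical components along the fibers contained in the leaves. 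Establishing this transfer of integrability across the submersion despite the nonvanishing curvature $A$ is the crux of the argument; once it is in place, $\hat\Sigma$ is a totally geodesic great sphere and $\Sigma=\R P^k$, completing both the equivalence and the final statement.
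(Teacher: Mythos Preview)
Your reverse implication is essentially correct and matches the paper's short argument: a section of $\pi^{-1}\cal{G}$ is horizontal, hence its geodesics project to geodesics, so its image is a totally geodesic section of~$\cal{G}$.

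The forward implication has a genuine gap. You correctly reduce it to showing that the lifted normal space $\tilde{\cal W}$ is totally real, and you correctly flag this as the crux, but your sketch does not establish it. The observation that each $J_i\in Q$ preserves leaves, horizontality, and normal spaces only tells you that $J_i$ carries $\nu_p(\pi^{-1}\cal G)$ onto $\nu_{J_ip}(\pi^{-1}\cal G)$, a normal space at a \emph{different} point of the fiber; this does not by itself compare the two subspaces of $V$ so as to force $\langle J_iX,Y\rangle=0$ for $X,Y\in\nu_p(\pi^{-1}\cal G)$. And autoparallelism of the normal distribution of $\cal G$ controls only the \emph{horizontal} part of $[X,Y]$ upstairs; it says nothing about the vertical part $2A_XY$, which is exactly what must vanish. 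Your own phrasing (``should force'', ``once it is in place'') concedes that the argument is not complete at this point.

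The paper does not attempt this step locally: it cites Lytchak~\cite{Lytchak:GAFA}, Proposition~9.1, for the forward implication and again for the assertion that sections are $\R P^k$; Lytchak's proof uses the global structure of polar foliations on symmetric spaces of compact type rather than pointwise O'Neill data. That a purely local approach is unlikely to suffice is underscored by the paper's own remark immediately following the proposition, which notes that for a single isoparametric \emph{submanifold} in $\H P^n$ (as opposed to a full polar foliation) the total realness of sections is an open difficulty. To complete your argument you would need either to invoke Lytchak's result or to supply a genuinely global argument exploiting the full foliation structure.
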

\begin{proof}
The necessity has been proved in \cite[Proposition~9.1]{Lytchak:GAFA}. Let us assume that $\pi^{-1}\cal{G}$ is a polar foliation on $S(V)$. Any section $\Sigma$ for $\pi^{-1}\cal{G}$ is horizontal. Since the geodesics in $\Sigma$ are horizontal, they are mapped to geodesics of $\H P^n$, and hence $\pi$ maps $\Sigma$ isometrically onto a section for $\cal{G}$. In particular, $\cal{G}$ is polar. The last assertion follows again from \cite{Lytchak:GAFA}. 
\end{proof}

\begin{remark}
It is known that polar and isoparametric foliations constitute the same subclass of singular Riemannian foliations on spheres, see~\cite[Theorem~2.7 and Claim~2 on p.~1173]{Al:illinois}. Hence, Proposition~\ref{prop:Hopf} and~\cite[Theorem~3.4]{HLO} 
imply that this also happens for quaternionic projective spaces.
(A similar remark applies to complex projective spaces, 
cf.~\cite[Proposition~2.1]{Dom:tams}.) 
\end{remark}
\begin{remark}
In~\cite{Dom:tams} it was proved that isoparametric submanifolds have a good behaviour with respect to the Hopf map $S^{2n+1}\to \C P^n$, and that any isoparametric submanifold of $\C P^n$ is an open part of a complete leaf of an isoparametric foliation that fills the whole $\C P^n$. Whether this is also true in the quaternionic setting remains an open question. According to \cite[Theorem~3.4]{HLO} (and similar calculations as in~\cite[Proposition~2.1]{Dom:tams}), the difficulty consists in showing that sections to an isoparametric submanifold in $\H P^n$ are totally real.
\end{remark}

\section{Quaternionic structures preserving a foliation}\label{sec:quaternionic_structures}
In this section we develop a method to study singular Riemannian foliations with closed leaves on quaternionic projective spaces. 

Let $V=\R^{4n+4}$ and let $\cal{F}$ be a closed foliation on $S(V)$, that is, a singular Riemannian foliation on $S(V)$ such that all its leaves are closed. Let $\rho\colon K\to \SO(V)$ be an effective representation of a Lie group $K$ such that $\rho(K)$ is the maximal connected subgroup of $\SO(V)$ leaving each leaf of $\cal{F}$ invariant. Since $\cal{F}$ is closed, $K$ is compact.

We say that a quaternionic structure $\g q$ on $V$ \emph{preserves} the foliation $\cal{F}$ if for all $p\in S(V)$ the orbit $Q\cdot p$ is contained in the leaf of $\cal{F}$ through $p$. Equivalently, $\g q$ preserves $\cal{F}$ if $\cal{F}$ is the pull-back of a foliation on $\H P^n$ under the Hopf map associated with $\g q$. Similarly (cf.~\cite[\S4.1]{Dom:tams}), a complex structure $J$ on $V$ (i.e.\ $J\in\g{so}(V)\cap \SO(V)$) preserves $\cal{F}$ if the Hopf circle $\{\cos(t) p+\sin(t)Jp:t\in \R\}$ through any $p\in S(V)$ is contained in the leaf of $\cal{F}$ through $p$.

\begin{proposition}\label{prop:rho}
Let $\g q$ be a quaternionic structure on $V$. The following are equivalent:
\begin{enumerate}[{\rm (a)}]
\item $\g q$ preserves $\cal{F}$.
\item There exists a subgroup $S$ of $K$ such that $\rho(S)=Q$.
\item There exists a subalgebra $\g{s}$ of $\g{k}$ such that $\rho_*(\g{s})=\g{q}$.
\end{enumerate}
In this situation, $S\cong \SU_2$ and $\g{s}\cong \g{su}_2$.
\end{proposition}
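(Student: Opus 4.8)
The plan is to prove the equivalences by establishing the cycle $(a)\Rightarrow(c)\Rightarrow(b)\Rightarrow(a)$, with the passage from the Lie algebra statement to the group statement being the one requiring genuine care. The key observation underlying everything is that $\g q$ preserving $\cal F$ means precisely that every $Q$-orbit lies inside a leaf of $\cal F$; equivalently, $Q$ leaves each leaf invariant. Since $\rho(K)$ is by definition the \emph{maximal} connected subgroup of $\SO(V)$ with this property, the condition ``$\g q$ preserves $\cal F$'' should translate into ``$Q\subseteq\rho(K)$,'' and the whole proposition is really the assertion that this containment can be realized at the level of the abstract group $K$ and its Lie algebra $\g k$.

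First I would prove $(a)\Rightarrow(c)$. Assuming $\g q$ preserves $\cal F$, each one-parameter subgroup of $Q$ moves points within leaves, so $Q$ is a connected subgroup of $\SO(V)$ leaving every leaf invariant; by maximality of $\rho(K)$ we get $Q\subseteq\rho(K)$, hence $\g q=\mathrm{Lie}(Q)\subseteq\rho_*(\g k)$. The point is now to produce a subalgebra $\g s\subseteq\g k$ with $\rho_*(\g s)=\g q$. Because $\rho$ is effective, $\rho_*\colon\g k\to\g{so}(V)$ is injective, so $\rho_*$ is a Lie algebra isomorphism onto its image $\rho_*(\g k)$; setting $\g s:=\rho_*^{-1}(\g q)$ gives a subalgebra of $\g k$ mapping isomorphically onto $\g q\cong\g{su}_2$. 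This immediately yields $\g s\cong\g{su}_2$ and disposes of the final assertion at the algebra level.

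The implication $(c)\Rightarrow(b)$ is where I expect the main obstacle. Given $\g s\subseteq\g k$ with $\rho_*(\g s)=\g q$, let $S$ be the connected Lie subgroup of $K$ with Lie algebra $\g s$. Then $\rho(S)$ is the connected subgroup of $\SO(V)$ with Lie algebra $\rho_*(\g s)=\g q$, which is exactly $Q$ by definition of $Q$; so $\rho(S)=Q$ as connected subgroups. The subtlety is the claim $S\cong\SU_2$: since $\g s\cong\g{su}_2$, the group $S$ is a connected Lie group with Lie algebra $\g{su}_2$, hence is either $\SU_2$ or $\SO_3=\SU_2/\mathbb Z_2$, and I must rule out the latter. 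Here effectivity of $\rho$ forces $\rho|_S$ to be injective (the kernel of $\rho$ is trivial, so $\rho|_S$ has trivial kernel), and thus $S\cong\rho(S)=Q\cong\SU_2$; the element $J_1\in\g q$ exponentiates in $Q$ to a loop of period $4\pi$ rather than $2\pi$ (since $\Exp(\pi J_1)=-\Id\neq\Id$), which certifies that $Q$, and hence $S$, is the simply connected $\SU_2$ rather than $\SO_3$. This is the delicate global point, and it is exactly where the hypothesis that $\g q$ contains a genuine complex structure $J_1$ with $J_1^2=-\Id$ does its work.

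Finally, $(b)\Rightarrow(a)$ is immediate: if $\rho(S)=Q$ for some subgroup $S\subseteq K$, then $Q=\rho(S)\subseteq\rho(K)$, and since every element of $\rho(K)$ leaves each leaf of $\cal F$ invariant, so does every element of $Q$; in particular each orbit $Q\cdot p$ is contained in the leaf through $p$, which is the definition of $\g q$ preserving $\cal F$. Closing the cycle $(a)\Rightarrow(c)\Rightarrow(b)\Rightarrow(a)$ gives the three equivalences, and the isomorphisms $S\cong\SU_2$, $\g s\cong\g{su}_2$ recorded along the way complete the proof.
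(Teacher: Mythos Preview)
Your proof is correct and essentially matches the paper's, both hinging on the maximality of $\rho(K)$ and the effectivity of $\rho$. The paper runs the cycle in the opposite order $(a)\Rightarrow(b)\Rightarrow(c)\Rightarrow(a)$, proving the last implication by an infinitesimal tangent-space argument (fundamental vector fields of $\rho(S)$ are tangent to leaves, hence $\rho(S)$-orbits lie in leaves) rather than your more direct observation that $Q=\rho(S)\subseteq\rho(K)$ already acts leafwise; it also leaves the verification $S\cong\SU_2$ implicit, whereas you spell it out.
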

\begin{proof}
If $\g q$ preserves $\cal{F}$, then $Q \subset \rho(K)$. By the effectiveness of $\rho$, there is a subgroup $S$ of $K$ such that $\rho(S)=Q$. Thus, (a) implies (b). Since the Lie algebra of $Q$ is $\g q$, (b) implies (c). Finally, assume that $\g{s}$ is a subalgebra of $\g{k}$ such that $\rho_*(\g{s})=\g{q}$. Let $S$ be the connected subgroup of $K$ with Lie algebra $\g{s}$. Then, for all $p\in S(V)$ and $X\in \g{s}$ we have $\rho_*(X)p\in T_p L_p$, where $L_p$ is the leaf of $\cal{F}$ through $p$. Since $T_p(\rho(S)\cdot p)=\{\rho_*(X)p:X\in\g{s}\}$, we have that $T_p(\rho(S)\cdot p)\subset T_p L_p$, for all $p\in S(V)$. Thus, $Q\cdot p=\rho(S)\cdot p\subset L_p$ for all $p\in S(V)$, which means that $\g q$ preserves $\cal{F}$. 
\end{proof}

A straightforward but important observation is the following.

\begin{proposition}\label{prop:2-sphere}
If $\g q=\rho_*(\g{s})$ is a quaternionic structure preserving $\cal{F}$, then for each nonzero $X\in\g{s}$ there is $\lambda>0$ such that $\rho_*(\lambda X)$ is a complex structure on $V$ preserving $\cal{F}$. In particular, $\{\rho_*(X):X\in\g{s},\,\rho_*(X)^2=-\Id\}$ is a $2$-sphere of complex structures on $V$ preserving $\cal{F}$.
\end{proposition}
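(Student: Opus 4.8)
The plan is to start from the hypothesis that $\g q = \rho_*(\g s)$ is a quaternionic structure preserving $\cal F$, where $\g s \cong \g{su}_2$ is a subalgebra of $\g k$ by Proposition~\ref{prop:rho}. The key structural fact I would exploit is that $\g q$, being a quaternionic structure, is spanned by elements $J_1, J_2, J_3$ satisfying $J_i^2 = -\Id$ and the quaternion relations, and that $Q = \{a_0\Id + a_1 J_1 + a_2 J_2 + a_3 J_3 : \sum a_i^2 = 1\}$. The elements of $\g q$ that are complex structures are exactly those $a_1 J_1 + a_2 J_2 + a_3 J_3$ with $a_1^2 + a_2^2 + a_3^2 = 1$: such an element $A$ satisfies $A^2 = -(a_1^2+a_2^2+a_3^2)\Id = -\Id$, using the relations $J_iJ_j = -J_jJ_i$ for $i\neq j$. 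Thus within $\g q$ itself the complex structures form a $2$-sphere.

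Next I would transfer this back through $\rho_*$. Since $\rho_*\colon \g s \to \g q$ is a Lie algebra isomorphism onto $\g q$, given any nonzero $X \in \g s$ the image $\rho_*(X)$ is a nonzero element of $\g q$, hence a nonzero skew-symmetric endomorphism of $V$. To produce a complex structure I would scale: because $\rho_*(X) \in \g q = \mathrm{span}\{J_1,J_2,J_3\}$, I can write $\rho_*(X) = b_1 J_1 + b_2 J_2 + b_3 J_3$ with $(b_1,b_2,b_3)\neq 0$, so $\rho_*(X)^2 = -(b_1^2+b_2^2+b_3^2)\Id$. Setting $\lambda = (b_1^2+b_2^2+b_3^2)^{-1/2} > 0$ gives $\rho_*(\lambda X)^2 = -\Id$, i.e.\ $\rho_*(\lambda X)$ is a complex structure on $V$. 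This proves the first assertion.

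It then remains to verify that each such complex structure $J := \rho_*(\lambda X)$ preserves $\cal F$, in the sense defined before Proposition~\ref{prop:rho}: the Hopf circle $\{\cos(t)p + \sin(t)Jp : t\in\R\}$ through each $p \in S(V)$ lies in the leaf through $p$. Here I would use that $\lambda X \in \g s \subset \g k$, so the one-parameter subgroup $\exp(t\,\lambda X)$ lies in $K$ and its image $\rho(\exp(t\,\lambda X)) = \exp(t\,J)$ leaves every leaf of $\cal F$ invariant. Since $J^2 = -\Id$, one computes $\exp(tJ) = \cos(t)\Id + \sin(t)J$, so $\exp(tJ)\,p = \cos(t)p + \sin(t)Jp$ is exactly the Hopf circle; as it is an orbit of a one-parameter subgroup of $K$, it stays in the leaf $L_p$. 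Finally, the set $\{\rho_*(X) : X\in\g s,\ \rho_*(X)^2=-\Id\}$ equals $\{J \in \g q : J^2 = -\Id\}$, which is the $2$-sphere identified in the first paragraph, and every one of its points preserves $\cal F$.

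I do not expect any serious obstacle here: the statement is essentially bookkeeping once one recognizes that $\rho_*(\g s) = \g q$ and that $\exp$ of a complex structure traces out precisely the Hopf circle. The only point requiring mild care is the normalization producing $\lambda$, which relies on the fact that the ambient quadratic form $A \mapsto -A^2$ restricted to $\g q$ is a positive multiple of the identity; this is immediate from the quaternion relations among $J_1, J_2, J_3$ but should be stated explicitly to justify that $\lambda$ is real and positive.
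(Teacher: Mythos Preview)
Your proof is correct. The paper itself omits the proof entirely, labeling the proposition ``a straightforward but important observation,'' so there is no approach to compare against; your argument is precisely the unpacking of definitions that justifies this claim, using the quaternion relations to normalize an arbitrary nonzero element of $\g q$ to a complex structure and then the fact that $\exp(tJ)=\cos(t)\Id+\sin(t)J$ exhibits the Hopf circle as an orbit of a one-parameter subgroup of $\rho(K)$.
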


The following characterization of the subspaces of $\g{k}$ that induce a quaternionic structure will be fundamental to our study.

\begin{proposition}\label{prop:sum_of_standard}
Let $\g{s}$ be a subalgebra of $\g{k}$ isomorphic to $\g{su}_2$. The following conditions are equivalent:
\begin{enumerate}[{\rm (a)}]
\item $\rho_*(\g{s})$ is a quaternionic structure on $V$.
\item $\rho_*\rvert_\g{s}\colon \g{s}\to \g{so}(V)$ is the direct sum of standard (i.e.\ nontrivial $4$-dimensional) real representations of $\g{su}_2$.
\item There is an $H\in \g{s}$ such that $\rho_*(H)$ is a complex structure on $V$.
\end{enumerate}
\end{proposition}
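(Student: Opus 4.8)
The plan is to prove the equivalence by analyzing the real representation $\rho_*|_{\g s}\colon\g{su}_2\to\g{so}(V)$ through its decomposition into irreducibles. The key representation-theoretic fact I would invoke is that the irreducible real representations of $\g{su}_2\cong\g{sp}_1$ are exactly the real forms of the complex irreducibles $\mathrm{Sym}^d(\C^2)$: for $d$ even these are orthogonal (real type) of real dimension $d+1$, and for $d$ odd they are of quaternionic type, so the underlying irreducible real representation has real dimension $2(d+1)$. The smallest faithful one is the \emph{standard} $4$-dimensional representation (the case $d=1$, i.e.\ $\g{sp}_1$ acting on $\H=\R^4$), and one checks directly that in the standard representation every nonzero $X\in\g s$ satisfies $\rho_*(X)^2=-\lambda^2\Id$ for some $\lambda>0$, so a suitably normalized element is a complex structure. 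This is precisely the content needed to link the Lie-algebraic data to quaternionic structures.

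First I would prove (b)$\Rightarrow$(a). If $\rho_*|_\g s$ is a direct sum $(\R^4)^{\oplus r}$ of standard representations, pick the standard generators $J_1,J_2,J_3\in\rho_*(\g s)$ coming from the quaternion units acting diagonally on each $\R^4$ summand; these satisfy $J_i^2=-\Id$ and the quaternionic relations $J_iJ_{i+1}=J_{i+2}$ blockwise, hence globally, so $\rho_*(\g s)$ is a quaternionic structure. Next, (a)$\Rightarrow$(c) is immediate: a quaternionic structure by definition contains elements $J_i$ with $J_i^2=-\Id$, so taking $H$ with $\rho_*(H)=J_1$ exhibits a complex structure. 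The implication (c)$\Rightarrow$(b) is the crux. Given $H\in\g s$ with $\rho_*(H)^2=-\Id$, I would decompose $\rho_*|_\g s=\bigoplus_\alpha W_\alpha$ into irreducible real summands and analyze each $\rho_*(H)|_{W_\alpha}$. In each summand $\rho_*(H)$ is skew-symmetric, so its eigenvalues are $\pm i\mu_\alpha$ times copies, where $\mu_\alpha$ depends only on the isomorphism type (since $H$ lies in a single $\g{su}_2$ and its image in an irreducible of highest weight $d$ has eigenvalues scaled by the weights $-d,-d+2,\dots,d$). The condition $\rho_*(H)^2=-\Id$ forces every eigenvalue to equal $\pm i$, which can only happen if each irreducible summand has $\rho_*(H)$ with all eigenvalues $\pm i$ — this eliminates the trivial summand (where $\rho_*(H)=0$) and any higher-dimensional irreducible (where the weight multiset $\{-d,\dots,d\}$ produces eigenvalues of differing absolute value as soon as $d\geq 2$). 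Hence only the standard $4$-dimensional representation survives, giving (b).

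The main obstacle I anticipate is the eigenvalue bookkeeping in (c)$\Rightarrow$(b): I must argue carefully that a single fixed element $H$ (not a generic or Cartan element) has image with eigenvalues distributed according to the weights, and that requiring all of them to have modulus $1$ simultaneously is incompatible with any highest weight $d\geq 2$. Concretely, for the $d=2$ adjoint (the $3$-dimensional irreducible) $\rho_*(H)$ has a zero eigenvalue, and for $d\geq 3$ the weight spread forces eigenvalues $\pm i\mu$ with more than one value of $\mu$; in both cases $\rho_*(H)^2=-\Id$ fails. I would organize this by conjugating $H$ into a standard $\g{su}_2$-triple so that $H$ is (a multiple of) the diagonal generator, reducing the computation to the known weight structure. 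Once this is established for each irreducible type, additivity over the decomposition completes the argument, and Proposition~\ref{prop:rho} together with Proposition~\ref{prop:2-sphere} already supplies the surrounding context linking $\g s$ to the foliation $\cal F$.
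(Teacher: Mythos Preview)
Your proposal is correct and follows essentially the same route as the paper: both arguments prove (c)$\Rightarrow$(b) by observing that any nonzero $H\in\g s$ spans a Cartan subalgebra of $\g s\cong\g{su}_2$, so the condition $\rho_*(H)^2=-\Id$ forces all weights of $\rho_*|_{\g s}$ to be $\pm\theta$ with $\theta(H)=1$, leaving only the standard $4$-dimensional representation as an irreducible summand; and both prove (b)$\Rightarrow$(a) by choosing a single basis of $\g s$ whose images act as quaternion units simultaneously on every $\R^4$ summand. The only point where the paper is slightly more explicit is in (b)$\Rightarrow$(a): it fixes the basis $\{X_1,X_2,X_3\}$ so that $\{\rho_1(X_j)\}$ is a canonical quaternionic basis on one summand $V_1$, and then invokes the intertwining isomorphisms $A_i\colon V_1\to V_i$ to transport the relations to every summand --- your phrase ``acting diagonally'' presupposes exactly this coherence, so you may want to spell it out.
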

\begin{proof}
The implication (a)$\Rightarrow$(b) is direct from the definition of 
quaternionic structure whereas (a)$\Rightarrow$(c)
follows from Proposition~\ref{prop:2-sphere}. Let us assume 
that there is an $H\in \g{s}$ such that $\rho_*(H)$ is a complex structure 
on $V$; in particular $H$ is nonzero. Consider $\R H$ as a maximal Abelian 
subalgebra of $\g{s}$. If $\theta\colon \R H\to \R$ is defined by 
$\theta(aH)=a$, for $a\in\R$, then (c) means that $\pm\theta$ are the 
only weights of the representation $\rho_*\rvert_\g{s}$, and both have the 
same multiplicities; henceforth, for the sake of convenience, we adopt the convention that the weights of representations of compact Lie algebras take real values. By the classification of the orthogonal representations 
of $\g{su}_2$, we deduce that $\rho_*\rvert_\g{s}$ is an orthogonal sum of 
standard representations of $\g{su}_2$. This shows that (c) implies~(b). Finally, 
if (b) holds, then $\rho_*\rvert_\g{s}=\bigoplus_i \rho_i$, where 
$\rho_i\colon \g{s}\to \O(V_i)\cong\O_4$ is a standard representation 
of $\g{su}_2$ and $V=\bigoplus_i V_i$ is a direct sum. Taking 
$\{X_1, X_2, X_3\}$ as a basis of $\g{s}$ such that 
$\{\rho_1(X_1),\rho_1(X_2),\rho_1(X_3)\}$ is a canonical basis 
for a quaternionic structure on $V_1$, we also have that 
$\{\rho_i(X_1),\rho_i(X_2),\rho_i(X_3)\}$ is a canonical basis 
for a quaternionic structure on $V_i$, for any 
$i=1,\dots, \frac{1}{4}\dim V$, since for each $i$ there exists 
an orthogonal transformation $A_i\colon V_1\to V_i$ such 
that $A_i \rho_1 (X) A_i^{-1}=\rho_i(X)$, for all $X\in\g{s}$. 
Thus, 
$\rho_*(\g{s})=\mathrm{span}\{\bigoplus_i\rho_i(X_1),\bigoplus_i\rho_i(X_2),\bigoplus_i\rho_i(X_3)\}$ is a quaternionic structure on $V$, proving~(a).
\end{proof}

Let $\cal{S}$ be the collection of subalgebras $\g{s}$ of $\g{k}$ such that $\rho_*(\g{s})$ is a quaternionic structure on $V$. Clearly, each element of $\cal{S}$ is isomorphic to $\g{su}_2$ and, by Proposition~\ref{prop:rho}, $\{\rho_*(\g{s}):\g{s}\in\cal{S}\}$ is the set of all quaternionic structures on $V$ that preserve $\cal{F}$.

We now analyse the congruence problem, namely, when two quaternionic structures preserving $\cal{F}$ give rise to congruent projected foliations. The basic observation is the following.

\begin{proposition}\label{prop:congruence}
Let $\g{q}_1$, $\g{q}_2$ be quaternionic structures on $V$, $\H P^n_1$, $\H P^n_2$ the corresponding quaternionic projective spaces, and $\pi_1$, $\pi_2$ the associated Hopf maps.

Two foliations $\cal{G}_1\subset \H P^n_1$ and $\cal{G}_2\subset \H P^n_2$ are congruent if and only if there exists an orthogonal transformation $A\in \O(V)$ satisfying $A \g{q}_1 A^{-1}=\g{q}_2$ and mapping leaves of $\pi_1^{-1}\cal{G}_1$ to leaves of $\pi_2^{-1}\cal{G}_2$.
\end{proposition}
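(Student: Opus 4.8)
The plan is to prove both directions by tracking how congruences of foliations on $\H P^n$ correspond to congruences on $S(V)$ that respect the quaternionic structures. The key conceptual point is that the Hopf map $\pi_i$ identifies $\H P^n_i$ with the leaf space $S(V)/Q_i$, where $Q_i$ is the group generated by $\g{q}_i$, and that isometries of $\H P^n_i$ lift to $\g{q}_i$-preserving isometries of $S(V)$. First I would recall that, since $\H P^n_i$ carries the Fubini-Study metric making $\pi_i$ a Riemannian submersion with totally geodesic fibers, the isometry group of $\H P^n_i$ is exactly the group of those isometries of $S(V)$ that normalize $Q_i$ (equivalently, map $\g{q}_i$-fibers to $\g{q}_i$-fibers), acting on the quotient. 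Because $V=\R^{4n+4}$ and the relevant isometries of the round sphere $S(V)$ are restrictions of elements of $\O(V)$, an isometry $\H P^n_1\to\H P^n_2$ corresponds precisely to an $A\in\O(V)$ satisfying $A\g{q}_1A^{-1}=\g{q}_2$: the conjugation condition is exactly what guarantees that $A$ carries $Q_1$-orbits to $Q_2$-orbits and hence descends to a well-defined map on the quotients.

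For the forward direction, I would assume $\cal{G}_1$ and $\cal{G}_2$ are congruent, so there is an isometry $f\colon\H P^n_1\to\H P^n_2$ carrying $\cal{G}_1$ to $\cal{G}_2$. Lifting $f$ to an $A\in\O(V)$ as above gives $A\g{q}_1A^{-1}=\g{q}_2$ and $\pi_2\circ A=f\circ\pi_1$. Since $f$ maps leaves of $\cal{G}_1$ to leaves of $\cal{G}_2$ and pull-back commutes with this lift, $A$ maps the leaves of $\pi_1^{-1}\cal{G}_1$ to those of $\pi_2^{-1}\cal{G}_2$, as required. For the converse, given such an $A$, the conjugation condition $A\g{q}_1A^{-1}=\g{q}_2$ ensures $A$ sends $Q_1$-orbits to $Q_2$-orbits, so $A$ descends to an isometry $f\colon\H P^n_1\to\H P^n_2$ with $\pi_2\circ A=f\circ\pi_1$. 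The condition that $A$ sends leaves of $\pi_1^{-1}\cal{G}_1$ to leaves of $\pi_2^{-1}\cal{G}_2$ then translates, after projecting via the $\pi_i$, into $f$ sending leaves of $\cal{G}_1$ to leaves of $\cal{G}_2$; hence $\cal{G}_1$ and $\cal{G}_2$ are congruent.

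I expect the main obstacle to be the careful justification that every congruence between the projected foliations arises from an element of $\O(V)$ conjugating one quaternionic structure to the other, i.e.\ the lifting statement for isometries of quaternionic projective space. One must verify that an arbitrary isometry of $\H P^n$ lifts to a linear isometry of the ambient $\R^{4n+4}$ normalizing $Q$; this uses that the full isometry group of $\H P^n$ (which is $\Sp_{n+1}\cdot\Sp_1/\{\pm1\}$, realized inside $\SO(V)$ as the normalizer of $Q$ modulo $Q$) is induced by such ambient transformations, together with the fact that $\O(V)$ acts transitively enough on the relevant data. The remaining verifications — that the descended map is a well-defined isometry and that leaf-correspondence upstairs matches leaf-correspondence downstairs — are then routine consequences of $\pi_i$ being a Riemannian submersion whose fibers are precisely the $Q_i$-orbits.
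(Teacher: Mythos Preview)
Your proposal is correct and follows essentially the same approach as the paper's proof, only with considerably more detail: the paper compresses the argument into two sentences, noting that congruence of $\cal{G}_1$ and $\cal{G}_2$ is equivalent to congruence of the pull-backs by some $A\in\O(V)$ that descends to an isometry between $\H P^n_1$ and $\H P^n_2$, and that the latter is equivalent to $A\g{q}_1A^{-1}=\g{q}_2$. Your explicit discussion of the lifting of isometries and the verification that leaf-correspondence transfers through the Riemannian submersions is precisely the content hidden in those two sentences; the only minor imprecision is your description of the isometry group of $\H P^n$ (the $\Sp_1$-factor acts trivially on the quotient, so the induced isometry group is $\Sp_{n+1}/\{\pm1\}$ rather than $\Sp_{n+1}\cdot\Sp_1/\{\pm1\}$), but this does not affect the argument.
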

\begin{proof}
$\cal{G}_1$ and $\cal{G}_2$ are congruent if and only if their pull-backs are congruent in $S(V)$ by an element $A\in \O(V)$, and $A$ descends to an isometry between $\H P^n_1$ and $\H P^n_2$. But the latter is equivalent to $A \g{q}_1 A^{-1}=\g{q}_2$.
\end{proof}

We introduce an equivalence relation $\sim$ in the set $\cal{S}$ that parametrizes the quaternionic structures on $V$ preserving $\cal{F}$. Two subalgebras $\g{s}_1$, $\g{s}_2\in\cal{S}$ are $\sim$-equivalent if $\pi_1(\cal{F})$ and $\pi_2(\cal{F})$ are congruent foliations on the corresponding quaternionic projective spaces; here $\pi_i\colon S(V)\to \H P^n_i$ is the Hopf map associated with the quaternionic structure $\rho_*(\g{s}_i)$, for $i=1,2$. Thus, the classification (up to congruence in $\H P^n$) of all foliations on $\H P^n$ that pull back under the Hopf map to a foliation congruent to $\cal{F}$ is equivalent to the determination of the moduli space $\cal{S}/\sim$. 

Let $\Aut(\cal{F})$ be the group of automorphisms of $\cal{F}$, that is, the group of all orthogonal transformations of $V$ that map leaves of $\cal{F}$ to leaves of $\cal{F}$. Then, by Proposition~\ref{prop:congruence}, given $\g{s}_1$, $\g{s}_2\in\cal{S}$, we have that $\g{s}_1\sim\g{s}_2$ if and only if there exists $A\in\Aut(\cal{F})$ such that $A\rho_*(\g{s}_1)A^{-1}=\rho_*(\g{s}_2)$. This suggests the introduction, for each $A\in\Aut(\cal{F})$, of an automorphism $\varphi_A\in\Aut(\g{k})$ of the Lie algebra $\g{k}$ by means of the relation $A\rho_*(X)A^{-1}=\rho_*(\varphi_A(X))$ for $X\in\g k$. Thus, we can consider the group
\[
\Aut(\g{k},\cal{F})=\{\Phi_A\in\End(\g{k}\oplus V): \Phi_A\rvert_\g{k}=\varphi_A,\,\Phi_A\rvert_V=A, \, A\in\Aut(\cal{F})\}.
\]
Hence, we have
\begin{proposition}
If $\g{s}\in \cal{S}$, then $\Phi(\g{s})\in\cal{S}$ for all $\Phi\in\Aut(\g{k},\cal{F})$.
Moreover, if $\g{s}_1$, $\g{s}_2\in \cal{S}$, then $\g{s}_1\sim \g{s}_2$ if and only if there exists $\Phi\in\Aut(\g{k},\cal{F})$ such that $\Phi(\g{s}_1)=\g{s}_2$. 
\end{proposition}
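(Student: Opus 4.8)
The plan is to establish both directions of the equivalence by reducing the congruence relation $\sim$, which is defined in terms of automorphisms $A \in \Aut(\cal{F})$ acting on $V$, to the action of the associated lifted maps $\Phi_A \in \Aut(\g{k},\cal{F})$ on the subalgebras $\g{s} \in \cal{S}$. The key bookkeeping device is the defining relation $A\rho_*(X)A^{-1}=\rho_*(\varphi_A(X))$, which ties together the $V$-part $A$ and the $\g{k}$-part $\varphi_A$ of each $\Phi_A$. First I would verify that $\varphi_A$ is well defined and is indeed an automorphism of $\g{k}$: since $\rho$ is effective, $\rho_*$ is injective, so for each $A \in \Aut(\cal{F})$ the map $X \mapsto \rho_*^{-1}(A\rho_*(X)A^{-1})$ makes sense provided $A\rho_*(\g{k})A^{-1} \subseteq \rho_*(\g{k})$; this last inclusion holds because conjugation by $A$ permutes the leaves of $\cal{F}$ and hence normalizes the maximal connected subgroup $\rho(K)$ preserving each leaf, so it normalizes $\rho_*(\g{k})$ at the Lie algebra level. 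The fact that $\varphi_A$ respects brackets is then automatic, since conjugation by $A$ is an algebra automorphism of $\End(V)$ and $\rho_*$ is a Lie algebra homomorphism.

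Next I would prove the invariance statement, that $\Phi(\g{s}) \in \cal{S}$ for every $\g{s} \in \cal{S}$ and every $\Phi = \Phi_A$. By definition of $\Phi_A$ we have $\Phi_A(\g{s}) = \varphi_A(\g{s})$ as a subalgebra of $\g{k}$, and $\varphi_A$ is an algebra automorphism so $\varphi_A(\g{s}) \cong \g{su}_2$. The point is to check that $\rho_*(\varphi_A(\g{s}))$ is again a quaternionic structure on $V$. Using the defining relation, $\rho_*(\varphi_A(\g{s})) = A\rho_*(\g{s})A^{-1}$, and conjugating a quaternionic structure $\rho_*(\g{s})$ by the orthogonal transformation $A$ sends the generators $J_1,J_2,J_3$ to $AJ_iA^{-1}$, which still satisfy $(AJ_iA^{-1})^2=-\Id$ and the quaternion relations, hence still form a quaternionic structure. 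Thus $\varphi_A(\g{s}) \in \cal{S}$, establishing the first assertion.

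For the equivalence itself, I would chain together the two prior results. By the discussion preceding the proposition (which invokes Proposition~\ref{prop:congruence}), $\g{s}_1 \sim \g{s}_2$ holds if and only if there exists $A \in \Aut(\cal{F})$ with $A\rho_*(\g{s}_1)A^{-1} = \rho_*(\g{s}_2)$. Rewriting the left-hand side via the defining relation gives $A\rho_*(\g{s}_1)A^{-1} = \rho_*(\varphi_A(\g{s}_1))$, so the condition becomes $\rho_*(\varphi_A(\g{s}_1)) = \rho_*(\g{s}_2)$, and by injectivity of $\rho_*$ this is equivalent to $\varphi_A(\g{s}_1) = \g{s}_2$, i.e.\ $\Phi_A(\g{s}_1)=\g{s}_2$. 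Conversely, any $\Phi \in \Aut(\g{k},\cal{F})$ is by construction of the form $\Phi_A$ for some $A \in \Aut(\cal{F})$, so the existence of $\Phi$ with $\Phi(\g{s}_1)=\g{s}_2$ is exactly the existence of such an $A$. This closes the loop and proves both directions simultaneously.

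The main obstacle I anticipate is not in the final algebraic chain, which is essentially a translation between two parametrizations, but in the preliminary step of showing that $\varphi_A$ is well defined as an automorphism of $\g{k}$ rather than merely a linear map of $\rho_*(\g{k})$. The subtlety is that the maximality and connectedness of $\rho(K)$ must be used to guarantee that conjugation by $A \in \Aut(\cal{F})$ preserves $\rho_*(\g{k})$; without the characterization of $\rho(K)$ as the maximal connected subgroup leaving each leaf invariant, an automorphism of $\cal{F}$ need not normalize $\rho(K)$. Once this normalization is in hand, the injectivity of $\rho_*$ (from effectiveness of $\rho$) lets us transport everything back to $\g{k}$ unambiguously, and the remaining verifications are the routine observations that algebra automorphisms and orthogonal conjugations preserve the structures in question.
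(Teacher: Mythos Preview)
Your proof is correct and follows exactly the reasoning the paper intends: the proposition is stated in the paper without proof, introduced by ``Hence, we have'' after the definitions of $\varphi_A$ and $\Aut(\g{k},\cal{F})$, so the authors regard it as an immediate consequence of the preceding discussion. Your argument makes explicit precisely those implicit steps---the well-definedness of $\varphi_A$ via effectiveness and maximality of $\rho(K)$, the preservation of quaternionic structures under orthogonal conjugation, and the translation between $A\rho_*(\g{s}_1)A^{-1}=\rho_*(\g{s}_2)$ and $\varphi_A(\g{s}_1)=\g{s}_2$ via injectivity of $\rho_*$---so there is nothing to add or correct.
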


Next we fix a maximal Abelian subalgebra $\g{t}$ of $\g{k}$. 
Given any set of simple roots for the pair $(\g{k},\g{t})$, 
let $\bar{C}$ be the closed Weyl chamber in $\g{t}$. We consider the sets $\cal{S}_\g{t}=\{\g{s}\in\cal{S}:\g{s}\cap\g{t}\neq 0\}$  and $\cal{S}_{\bar{C}}=\{\g{s}\in\cal{S}:\g{s}\cap\bar{C}\neq 0\}$. Since $\rho(K)\subset \Aut(\cal{F})$, it follows that $(\Ad\oplus\rho)(K)=\{\Phi_{\rho(k)}:k\in K\}$ is a subgroup of $\Aut(\g{k}, \cal{F})$. Then, since for each $\g{s}\in\cal{S}$ there exists $k\in K$ such that $\Ad(k)\g{s}\in \cal{S}_{\bar{C}}\subset\cal{S}_\g{t}$, and $\Ad(k)\g{s}\sim \g{s}$, we have

\begin{proposition}
$\cal{S}/\!\sim \,\cong \cal{S}_\g{t}/\!\sim \,\cong \cal{S}_{\bar{C}}/\!\sim$.
\end{proposition}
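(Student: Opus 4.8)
The plan is to prove the chain of isomorphisms $\cal{S}/\!\sim\,\cong\cal{S}_\g{t}/\!\sim\,\cong\cal{S}_{\bar{C}}/\!\sim$ by producing, for each $\g{s}\in\cal{S}$, a representative lying in $\cal{S}_{\bar{C}}$ that is $\sim$-equivalent to $\g{s}$, and this representative is obtained simply by conjugating by a suitable element of $K$. The crucial tool is the remark immediately preceding the statement: since $\rho(K)\subset\Aut(\cal{F})$, every inner automorphism $\Phi_{\rho(k)}=(\Ad\oplus\rho)(k)$ belongs to $\Aut(\g{k},\cal{F})$, and by the previous proposition $\g{s}\sim\Ad(k)\g{s}$ for every $k\in K$. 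Therefore it suffices to show that the $K$-orbit of any $\g{s}\in\cal{S}$ meets $\cal{S}_{\bar C}$, for then $\cal{S}$, $\cal{S}_\g{t}$ and $\cal{S}_{\bar C}$ all inject into one another under $\sim$ and the three quotients coincide.

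First I would observe that $\cal{S}_{\bar C}\subset\cal{S}_\g{t}\subset\cal{S}$, so the maps $\cal{S}_{\bar C}/\!\sim\,\to\cal{S}_\g{t}/\!\sim\,\to\cal{S}/\!\sim$ induced by inclusion are obviously defined; the content is their surjectivity. Fix $\g{s}\in\cal{S}$ and pick any nonzero $X\in\g{s}$. Since $\g{k}$ is a compact Lie algebra and $\g{t}$ is a maximal Abelian subalgebra, every element of $\g{k}$ is $\Ad(K)$-conjugate into $\g{t}$, and in fact into the closed Weyl chamber $\bar C$ (here one uses that $K$ is compact and connected, or at least that $\rho(K)$ is connected, which holds as $\rho(K)$ is the maximal \emph{connected} subgroup leaving the leaves invariant). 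Thus there is $k\in K$ with $\Ad(k)X\in\bar C$. Then $0\neq\Ad(k)X\in\Ad(k)\g{s}\cap\bar C$, so $\Ad(k)\g{s}\in\cal{S}_{\bar C}$, noting that $\Ad(k)\g{s}\in\cal{S}$ because $\rho_*(\Ad(k)\g{s})=\rho(k)\rho_*(\g{s})\rho(k)^{-1}$ is again a quaternionic structure (a conjugate of one). Finally $\g{s}\sim\Ad(k)\g{s}$ as explained above. This simultaneously proves that every $\sim$-class in $\cal{S}$ has a representative in $\cal{S}_\g{t}$ and one in $\cal{S}_{\bar C}$, giving surjectivity of both inclusion-induced maps.

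It remains to check that these inclusion-induced maps are well-defined and injective on $\sim$-classes, but this is automatic: $\sim$ is defined on all of $\cal{S}$ by the same condition (existence of $\Phi\in\Aut(\g{k},\cal{F})$ carrying one subalgebra to the other), so for $\g{s}_1,\g{s}_2\in\cal{S}_\g{t}$ one has $\g{s}_1\sim\g{s}_2$ in $\cal{S}_\g{t}$ precisely when $\g{s}_1\sim\g{s}_2$ in $\cal{S}$, and likewise for $\cal{S}_{\bar C}$. Hence the inclusions descend to injections on quotients, and combined with the surjectivity established above they are bijections, which yields the claimed isomorphisms.

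The only genuinely delicate point, and the one I would treat most carefully, is the passage from ``$X$ conjugate into $\g{t}$'' to ``$X$ conjugate into $\bar C$,'' together with ensuring the conjugating element lies in the \emph{connected} group $K$ (so that $\Phi_{\rho(k)}$ is indeed of the form $(\Ad\oplus\rho)(k)$ and hence in $\Aut(\g{k},\cal{F})$). This rests on the standard facts that $\g{t}$ is a maximal torus subalgebra of the compact $\g{k}$, that $\bar C$ is a fundamental domain for the Weyl group action on $\g{t}$, and that the Weyl group is realized by inner automorphisms from $K$; once these are invoked the argument is routine. I expect no further obstacle, since everything else follows formally from the preceding propositions.
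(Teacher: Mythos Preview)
Your proof is correct and follows essentially the same approach as the paper: the paper's argument, given in the paragraph immediately preceding the proposition, is simply that $(\Ad\oplus\rho)(K)\subset\Aut(\g{k},\cal{F})$ and that for each $\g{s}\in\cal{S}$ one can find $k\in K$ with $\Ad(k)\g{s}\in\cal{S}_{\bar C}$, whence $\g{s}\sim\Ad(k)\g{s}$. You have unpacked this more carefully, explicitly picking a nonzero $X\in\g{s}$ and conjugating it into $\bar C$, checking that $\Ad(k)\g{s}$ remains in $\cal{S}$, and verifying that the inclusion-induced maps on quotients are automatically injective; the paper leaves all of this implicit.
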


Similarly as in the definition of $\cal{S}$, we consider the set $\cal{J}$ of elements $X\in\g{k}$ such that $\rho_*(X)$ is a complex structure preserving the foliation $\cal{F}$. On $\cal{J}$ we also consider an equivalence relation $\sim$: given $X_1$, $X_2\in \cal{J}$, we say that $X_1\sim X_2$ 
if there is $\Phi\in\Aut(\g{k},\cal{F})$ such that $\Phi(X_1)\in\{\pm X_2\}$. Both $\cal{J}$ and this relation $\sim$ have been studied in \cite{Dom:tams}. Similarly as for $\cal{S}$, we have that $\cal{J}/\!\sim \,\cong \cal{J}\cap\g{t}/\!\sim \,\cong \cal{J}\cap{\bar{C}}/\!\sim$.  

Let $\g{s}\in\cal{S}_{\bar{C}}$. By Proposition~\ref{prop:2-sphere}, there is an $H\in\cal{J}\cap\bar{C}\cap \g{s}$. If there are $H_1$, $H_2\in\cal{J}\cap\bar{C}\cap \g{s}$, $H_1\neq H_2$, then they must be collinear because the rank of $\g{s}$ is one. Since both are in $\cal{J}$, $H_1\in\{\pm H_2\}$, and thus $H_1\sim H_2$. This yields a well-defined map $\cal{S}_{\bar{C}}\to \cal{J}\cap\bar{C}/\!\sim$. This map descends to a map $\cal{S}_{\bar{C}}/\!\sim\to \cal{J}\cap\bar{C}/\!\sim$.  Indeed, let $\g{s}_1$, $\g{s}_2\in \cal{S}_{\bar{C}}$, $\Phi(\g{s}_1)= \g{s}_2$ for some $\Phi\in\Aut(\g{k},\cal{F})$, and $H_i\in\cal{J}\cap\bar{C}\cap\g{s}_i$, $i=1,2$. Then $\Phi(H_1)\in\cal{J}\cap\g{s}_2$ and there exists an element~$k$ in the Lie subgroup of $K$ with Lie algebra $\g{s}_2$ such that $\Ad(k)\Phi(H_1)\in\cal{J}\cap\bar{C}\cap\g{s}_2$. Since $H_2$ and $\Ad(k)\Phi(H_1)$ lie in $\cal{J}\cap\bar{C}\cap\g{s}_2$, we deduce that $H_2\sim\Ad(k)\Phi(H_1)$, and thus~$H_1\sim H_2$. 

This shows the existence of a naturally defined map $\iota\colon\cal{S}_{\bar{C}}/\sim\to \cal{J}\cap\bar{C}/\sim$, which maps a class $[\g{s}]$ with $\g{s}\in\cal{S}_{\bar{C}}$ to the class $\iota([\g{s}])=[H]$, where $H$ is the unique element in $\cal{J}\cap\bar{C}\cap\g{s}$ up to sign. Moreover:
\begin{proposition}\label{prop:injective}
The map $\iota\colon\cal{S}_{\bar{C}}/\!\sim\,\to \cal{J}\cap\bar{C}/\!\sim$ defined above is injective.
\end{proposition}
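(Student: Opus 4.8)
The plan is to show that $\iota$ is injective by proving that if two classes $[\g{s}_1]$ and $[\g{s}_2]$ in $\cal{S}_{\bar{C}}/\!\sim$ are sent to the same class in $\cal{J}\cap\bar{C}/\!\sim$, then $\g{s}_1\sim\g{s}_2$. So I would start with $\g{s}_1,\g{s}_2\in\cal{S}_{\bar{C}}$ and, by Proposition~\ref{prop:2-sphere}, pick the (up-to-sign unique) elements $H_i\in\cal{J}\cap\bar{C}\cap\g{s}_i$ with $\iota([\g{s}_i])=[H_i]$. The hypothesis $\iota([\g{s}_1])=\iota([\g{s}_2])$ means $H_1\sim H_2$ in $\cal{J}\cap\bar{C}$, so there is $\Phi\in\Aut(\g{k},\cal{F})$ with $\Phi(H_1)\in\{\pm H_2\}$. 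Applying $\Phi$ to $\g{s}_1$, and using that $\Phi(\g{s}_1)\in\cal{S}$ with $\Phi(H_1)\in\Phi(\g{s}_1)$, I may replace $\g{s}_1$ by $\Phi(\g{s}_1)$ (which is $\sim$-equivalent to it); thus I reduce to the case where $H_1=\pm H_2=:H$ and both $\g{s}_1,\g{s}_2$ contain the \emph{same} element $H$ with $\rho_*(H)$ a fixed complex structure on $V$. The whole problem is therefore to show that a subalgebra $\g{s}\in\cal{S}$ is determined up to $\sim$-equivalence by the single element $H\in\cal{J}\cap\g{s}$ it contains.

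The key structural step is to describe $\g{s}$ intrinsically in terms of $H$. Since $\g{s}\cong\g{su}_2$ has rank one and $H$ spans a maximal Abelian subalgebra of $\g{s}$, the adjoint action $\ad(H)$ on $\g{s}$ has $\{0,\pm 2i\|H\|\}$ (up to normalization) as eigenvalues, the zero eigenspace being $\R H$ and the remaining part being the two-dimensional root space inside $\g{s}$ on which $\ad(H)$ acts as rotation. Translating to the representation $\rho_*$, the condition that $\rho_*(H)$ is a complex structure forces, via the weight analysis already used in Proposition~\ref{prop:sum_of_standard}, that $\ad(\rho_*(H))=[\rho_*(H),\,\cdot\,]$ acts on $\rho_*(\g{s})\subset\g{so}(V)$ with a specific pair of nonzero eigenvalues, so that the two generators $\rho_*(X_2),\rho_*(X_3)$ complementing $\rho_*(H)$ are recovered (up to the $\SO_2$-ambiguity of rotating them within the root space) as eigenvectors of $\ad(\rho_*(H))$ inside $\g{k}$. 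Concretely, I would argue that $\g{s}$ is the $\g{su}_2$-subalgebra generated by $H$ together with a suitable eigenspace of $\ad(H)$ in $\g{k}$, and that any two choices leading to the same $H$ differ by an element $\Ad(\exp(tH))$, which lies in $(\Ad\oplus\rho)(K)\subset\Aut(\g{k},\cal{F})$.

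Putting this together, once $\g{s}_1,\g{s}_2\ni H$ with the same $H$, the final step is to produce $\Phi\in\Aut(\g{k},\cal{F})$ carrying $\g{s}_1$ to $\g{s}_2$. The natural candidate is $\Phi_{\rho(k)}=(\Ad\oplus\rho)(k)$ for $k$ in the centralizer-type subgroup fixing $H$; the rotation freedom in the root space of $H$ is exactly realized by the circle $\exp(\R H)\subset S_1$ (the $\SU_2$-subgroup with Lie algebra $\g{s}_1$) acting by $\Ad$, which fixes $H$ and rotates the complementary generators, matching $\g{s}_1$ to $\g{s}_2$. Since $\exp(\R H)\subset K$, the corresponding $\Phi$ lies in $\Aut(\g{k},\cal{F})$, giving $\g{s}_1\sim\g{s}_2$ as required.

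I expect the main obstacle to be the structural step: verifying rigorously that the full subalgebra $\g{s}$ (not just its image $\rho_*(\g{s})$) is canonically reconstructed from $H$ inside $\g{k}$, and that the residual freedom is \emph{entirely} accounted for by inner automorphisms $\Ad(\exp(tH))$ coming from $K$ rather than by some outer automorphism outside $\Aut(\g{k},\cal{F})$. This requires care because $\rho_*$ need not be injective on all of $\g{k}$ a priori and because the weight/eigenspace decomposition must be tracked simultaneously in $\g{k}$ (via $\ad$) and in $\g{so}(V)$ (via $\rho_*$); reconciling these two pictures, while ensuring the connecting automorphism respects $\cal{F}$, is the delicate part of the argument.
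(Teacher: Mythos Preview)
Your reduction step is correct and matches the paper: after applying a suitable $\Phi\in\Aut(\g{k},\cal{F})$ you may assume $\g{s}_1$ and $\g{s}_2$ share a common nonzero element $H\in\bar{C}$ with $\rho_*(H)$ a complex structure. The gap is in the ``structural step'' that follows. Your claim that the complementary generators of $\g{s}$ are recovered from the eigenspaces of $\ad(H)$ ``up to the $\SO_2$-ambiguity'' realized by $\Ad(\exp(tH))$ is false in general: the $\ad(H)$-eigenspace in $\g{k}^{\mathbb C}$ for the relevant eigenvalue can have large dimension, and $\exp(tH)$ acts on it by a scalar, so it cannot carry one line in that eigenspace to a different one. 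Concretely, take $\g{k}=\g{su}_{2p}$ and $H$ with diagonal entries $(i,\ldots,i,-i,\ldots,-i)$; the $2i$-eigenspace of $\ad(H)$ is the full $p\times p$ off-diagonal block, and there are many inequivalent choices of root vectors giving distinct $\g{su}_2$-subalgebras through $H$, none related by the one-parameter group $\exp(\mathbb{R} H)$.

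What actually closes the argument is the theorem of Dynkin (see the reference to Vogan, Theorem~7, already used in the paper in Remark~\ref{rem:finite}): two $\g{su}_2$-subalgebras of a compact Lie algebra $\g{k}$ sharing a common nonzero semisimple element are conjugate by an element of $K$ (indeed by an element of the centralizer $Z_K(H)$). Since $(\Ad\oplus\rho)(K)\subset\Aut(\g{k},\cal{F})$, this immediately gives $\g{s}_1\sim\g{s}_2$. The paper's proof is exactly this one-line invocation of Dynkin after the reduction you carried out. Your eigenspace heuristic is the beginning of a proof of Dynkin's theorem, but the nontrivial content---that the centralizer $Z_K(H)$ acts transitively on the set of $\g{su}_2$-subalgebras through $H$---is precisely what you are missing and what you (correctly) flagged as the ``main obstacle''.
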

\begin{proof}
Let $\g{s}_1$, $\g{s}_2\in\cal{S}_{\bar{C}}$ such that $\iota([\g{s}_1])=\iota([\g{s}_2])$. By conjugating one of $\g{s}_1$, $\g{s}_2$ by an element of $\Aut(\g{k},\cal{F})$ if necessary, we can assume that $\dim(\g{s}_1\cap\g{s}_2)\geq 1$ and that there is a nonzero $H\in \bar{C}\cap\g{s}_1\cap\g{s}_2$. By a result of Dynkin about homomorphisms from $\g{su}_2$ to a compact Lie algebra $\g{k}$ (see~\cite[Theorem~7]{Vogan}), we deduce that there is a $k\in K$ such that $\Ad(k)\g{s}_1=\g{s}_2$. Hence $\g{s}_1\sim \g{s}_2$.
\end{proof}

\begin{remark}\label{rem:finite}
Another related result by Dynkin (see~\cite[Theorem~8]{Vogan}) guarantees that the moduli space $\cal{S}/\!\sim$ is finite, since the set of $K$-conjugacy classes of Lie subalgebras of $\g{k}$ isomorphic to $\g{su}_2$ is finite. Geometrically, this means that, given a closed foliation $\cal{F}$ on $S^{4n+3}$, there is at most a finite number of noncongruent foliations on $\H P^n$ that pull back to a foliation congruent to $\cal{F}$ under the Hopf map. Although it was not stated in \cite{Dom:tams}, this result is also true for the complex projective space $\C P^n$. Indeed, as shown in \cite[Proposition~4.2]{Dom:tams}, given $H\in \g{t}$, $\rho_*(H)$ is a complex structure on $V$ preserving $\cal{F}$ if and only if $\lambda(H)\in\{\pm 1\}$ for every weight $\lambda$ of the complexified representation $\rho_*^\C\colon \g{k}\to\g{u}(V^\C)$. Since $\rho$ is effective, the weights of $\rho_*^\C$ span $\g{t}^*$, and thus the number of $H\in\g{t}$ satisfying the condition is finite. The claim follows since any complex structure on $V$ preserving $\cal{F}$ is equivalent to one of the form $\rho_*(H)$ for some $H\in\g{t}$.
\end{remark}

The following result reduces the study of quaternionic structures preserving a 
decomposable foliation to the indecomposable case. Recall that given Euclidean spaces $V_i$, $i=1,\dots,r$, and a foliation $\cal{F}_i$ on the unit sphere of $V_i$ for each $i$, the spherical join $\cal{F}_1\ast\dots\ast\cal{F}_r$ can be described as the restriction of the foliation $\hat{\cal{F}}_1\times\dots\times\hat{\cal{F}}_r$ to the unit sphere of $\bigoplus_{i=1}^r V_i$, where $\hat{\cal{F}}_i$ is the foliation on $V_i$ whose leaves are of the form $r L$, for $r\geq 0$ and $L\in\cal{F}_i$. 
A foliation $\cal F$ on $S(V)$ is called \emph{indecomposable} if it cannot be 
written as a spherical join, and it is called \emph{decomposable} otherwise. 
Every foliation $\cal F$ can be written in an essentially unique way as
a spherical join $\cal F_0\ast\cal{F}_1\ast\dots\ast\cal{F}_r$, where 
$\cal F_0$ consists only of zero-dimensional leaves, and $\cal F_1,\ldots,\cal F_r$
are indecomposable without zero-dimensional leaves, see~\cite{FL} 
and~\cite[Proposition~2.1]{Radeschi}.

\begin{proposition}\label{prop:product}
Let $\cal{F}=\cal F_0\ast\cal{F}_1\ast\dots\ast\cal{F}_r$ be 
as above, where each $\cal{F}_i$ is a closed foliation on the unit sphere of a Euclidean space $V_i$. Let $K$, $\rho$ and $\cal S$ be as above in this section.
Then:
\begin{enumerate}[{\rm (a)}]
\item $K=\prod_{i=0}^r K_i$ for certain subgroups $K_i$ of $K$, where $\rho(K_i)$ is the maximal connected subgroup of $\SO(V)$ that acts trivially on the orthogonal complement of $V_i$ in $V$ and preserves each one of the leaves of $\cal{F}_i$. 
In particular, $K_0$ is the trivial group.
\item $\Aut(\cal F)$ is the subgroup of $\O(V)$ generated by 
$\prod_{i=0}^r\Aut(\cal F_i)$ and all permutations on sets of mutually 
congruent $\cal F_i$. 
\item Let $\cal S_i$ 
be the collection of $\g{su}_2$-subalgebras $\g s_i$ of 
$\g k_i$ such that 
$\rho_*(\g s_i)$ is a quaternionic structure on $V_i$,
for $i=1,\ldots,r$.  
If $\g{s}$ is a subalgebra of $\g{k}$ isomorphic to $\g{su}_2$, then 
$\g{s}\in\cal S$ if and only if $V_0=0$ and $\g{s}_i\in\cal S_i$ for every $i$,  
where $\g{s}_i$ is the image of $\g{s}$ under
the projection of $\g k$ onto $\g{k}_i$. It follows that 
every $\g s\in\cal S$ can be recovered as a diagonal $\g{su}_2$-subalgebra
in $\bigoplus_{i=1}^r\g s_i$ for $\g s_i\in\cal S_i$. 

\end{enumerate}

\end{proposition}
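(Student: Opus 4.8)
The three parts are naturally nested: (a) produces the Lie algebra splitting $\g k=\bigoplus_i\g k_i$ and the triviality of the action on $V_0$, and (c) is then a representation-theoretic computation built on top of (a). I would prove them in order, (a), (b), (c).

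For (a), the guiding principle is that the join factors are intrinsic to $\cal F$, so the maximal connected leaf-preserving group must respect them. Concretely, for $i\geq 1$ the leaves of $\cal F$ contained in $S(V_i)$ are precisely the leaves of $\cal F_i$, and these cover $S(V_i)$; since every $g\in\rho(K)$ fixes each leaf of $\cal F$ setwise, $g$ preserves $S(V_i)$ and hence $V_i$. Because $\cal F_0$ has only zero-dimensional leaves, every point of $S(V_0)$ is its own leaf, so $g$ restricts to the identity on $V_0$. Writing $G_i$ for the maximal connected subgroup of $\SO(V)$ acting trivially on $V_i^\perp$ and preserving each leaf of $\cal F_i$, one checks that $\prod_i G_i\subseteq\rho(K)$ (it is connected and preserves every join leaf), while the previous remarks give $\rho(K)\subseteq\prod_i G_i$. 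The $G_i$ commute and intersect trivially, so the product is direct; effectiveness of $\rho$ then transports this to $K=\prod_i K_i$, with $K_0$ trivial since preserving the point-leaves of $\cal F_0$ forces the identity on $V_0$.

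For (b), the inclusions $\prod_i\Aut(\cal F_i)\subseteq\Aut(\cal F)$ and ``swaps of mutually congruent factors $\subseteq\Aut(\cal F)$'' are immediate from the description of join leaves. For the reverse containment I would invoke the essential uniqueness of the join decomposition (\cite{FL} and~\cite[Proposition~2.1]{Radeschi}): an $A\in\Aut(\cal F)$ carries the join decomposition of $\cal F$ to another join decomposition of the same foliation, so it must fix $V_0$ and permute the remaining factors by some $\sigma$, with each $A|_{V_i}$ a congruence $\cal F_i\cong\cal F_{\sigma(i)}$. Composing $A$ with a suitable product of factor-swaps realizing $\sigma^{-1}$ leaves each $V_i$ and each $\cal F_i$ invariant, hence lands in $\prod_i\Aut(\cal F_i)$; this exhibits $A$ as a product of the claimed generators.

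For (c), the heart of the statement, I would argue entirely through Proposition~\ref{prop:sum_of_standard} rather than through complex structures. By (a) we have $\g k=\bigoplus_{i=1}^r\g k_i$, the action on $V_0$ is trivial, and the action of $\g s$ on $V_i$ is $\rho_{i,*}\circ\phi_i$, where $\phi_i\colon\g s\to\g s_i=\phi_i(\g s)$ is the projection onto $\g k_i$. Thus, as a representation of $\g s\cong\g{su}_2$, we get $\rho_*|_{\g s}=\bigoplus_{i=1}^r(\rho_{i,*}\circ\phi_i)$ together with a trivial summand on $V_0$. If $\g s\in\cal S$, then $\rho_*|_{\g s}$ is a sum of standard representations, which admit no trivial summand; since each $V_i\neq 0$, this forces $V_0=0$ and each $\phi_i\neq 0$, and simplicity of $\g{su}_2$ then makes every $\phi_i$ an isomorphism (so $\g s$ is diagonal in $\bigoplus_i\g s_i$). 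Restricting the semisimple representation to the invariant subspace $V_i$ shows $\rho_{i,*}\circ\phi_i$ is a sum of standards, and transporting along $\phi_i$ gives $\g s_i\in\cal S_i$. Conversely, if $V_0=0$ and each $\g s_i\in\cal S_i$, then each $\rho_{i,*}|_{\g s_i}$ is a sum of standards, hence so is each $\rho_{i,*}\circ\phi_i$ (a standard representation precomposed with a Lie algebra isomorphism is again standard), and therefore so is their direct sum $\rho_*|_{\g s}$; Proposition~\ref{prop:sum_of_standard} then yields $\g s\in\cal S$.

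The main obstacle I anticipate is precisely the converse direction of (c). The naive attempt is to manufacture a single $H\in\g s$ with $\rho_*(H)$ a complex structure by choosing $H$ whose projections $\phi_i(H)$ are simultaneously of the correct norm on each $V_i$; but these norm conditions depend on the individual scalings of the isomorphisms $\phi_i$ and need not admit a common solution. Replacing the complex-structure condition by the representation-theoretic criterion of Proposition~\ref{prop:sum_of_standard}, which is manifestly stable under direct sums and under precomposition with Lie algebra isomorphisms, sidesteps this difficulty. A secondary point requiring care, underlying both (a) and (b), is the intrinsic characterization of the factors $V_i$ and the appeal to the essential uniqueness of the join decomposition, which is exactly what guarantees that leaf-preserving maps respect, and automorphisms permute, these subspaces.
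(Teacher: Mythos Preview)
Your proposal is correct and follows essentially the same route as the paper: part~(a) is handled by the elementary observation that leaf-preserving maps must preserve each $V_i$ (the paper simply cites~\cite[Proposition~4.1(iii)]{Dom:tams}); part~(b) is deduced from the essential uniqueness of the product decomposition via~\cite{FL}; and part~(c) is proved exactly as you outline, by noting that $\rho_*|_{\g s}$ acts on each $V_i$ through the projection $\g s\to\g s_i$ and then invoking the representation-theoretic criterion of Proposition~\ref{prop:sum_of_standard}(b) on each factor. Your observation that the complex-structure criterion~\ref{prop:sum_of_standard}(c) is ill-suited to the converse direction because of incompatible scalings on the $\phi_i$ is well taken, and indeed the paper also works through criterion~(b) for precisely this reason.
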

\begin{proof}
Claim~(a) is easy, cf.~\cite[Proposition~4.1(iii)]{Dom:tams}, whereas claim~(b) is a consequence of applying~\cite[Theorem~1.1]{FL} to the space of leaves
$V/\hat{\cal{F}}$ of the product foliation $\hat{\cal F}=\hat{\cal F}_0\times\dots\times\hat{\cal F}_r$.

Next, note that $\cal S\neq\emptyset$ only if $V_0=0$. 
So let $\g{s}\cong\g{su}_2$ be a subalgebra of $\g{k}$. 
Let $\pi_i\colon \g{k}\to\g{k}_i$ be the projection map, for $i=1,\dots, r$. For a fixed $i\in\{1,\dots, r\}$, take any $X\in \g{s}$ and $v\in V_i$, and write $X=\sum_{j=1}^r X_j$, where $X_j=\pi_j(X)\in\g{k}_j$. Then $\rho_*(\pi_i(X))v=\rho_*(X_i)v=\rho_*(X)v$, where the last equality follows from the fact that $\rho(K_j)$ acts trivially on $V_i$ for every $j\neq i$.  
In any case $\pi_i:\g s\to\g s_i$ is an isomorphism
and the representations $\rho_*\rvert_{\g{s}}\colon \g{s}\to \g{so}(V_i)$ and $\rho_*\rvert_{\g{s_i}}\colon \g{s_i}\to \g{so}(V_i)$ are equivalent, for $i=1,\dots, r$.
The equivalence in claim~(c) now follows from Proposition~\ref{prop:sum_of_standard}~(b). 

Finally, note that any two diagonal $\g{su}_2$-subalgebras in $\bigoplus_{i=1}^r\g s_i$, with $\g s_i\in\cal S_i$, are $\sim$-equi\-val\-ent elements of $\cal{S}$ by virtue of the fact that each $\g{s}_i$ admits only inner automorphisms, and $\rho(S_i)\subset\rho(K_i)\subset \Aut(\cal{F}_i)\subset\Aut(\cal{F})$.
\end{proof}

Proposition~\ref{prop:injective} suggests a method to determine the moduli space $\cal{S}/\!\sim\,\cong\cal{S}_{\bar{C}}/\!\sim$ for a fixed closed foliation $\cal{F}$ on $S(V)$ once we know the moduli space $\cal{J}/\!\sim\,\cong\cal{J}\cap\bar{C}/\!\sim$. For each class in $\cal{J}\cap\bar{C}/\!\sim$, take a representative $H\in \cal{J}\cap\bar{C}$ and determine if there exists a Lie subalgebra $\g{s}$ of $\g{k}$ isomorphic to $\g{su}_2$ and containing $H$. If so, then $\g{s}\in\cal{S}_{\bar{C}}$ by Proposition~\ref{prop:sum_of_standard}, and any other element in $\cal{S}$ containing $H$ is $\sim$-equivalent to $\g{s}$; namely, $\iota^{-1}[H]=\{[\g{s}]\}$. If not, then $\iota^{-1}[H]=\emptyset$ and then one should move on to a different class in $\cal{J}\cap\bar{C}/\!\sim$. This way, we determine $\cal{S}/\!\sim$ and, equivalently, the possible foliations on $\H P^n$ that pull back under the Hopf map to a foliation congruent to $\cal{F}$.

In \cite{Dom:tams}, the first author described the set $\cal{J}/\!\sim$ for all irreducible polar foliations $\cal{F}$ on spheres, except for those inhomogeneous codimension one foliations on $S^{31}$ whose hypersurfaces have $4$ principal curvatures with multiplicities $(7,8)$. Therefore, we can carry out the approach described above to determine the set $\cal{S}/\!\sim$ for all irreducible polar foliations $\cal{F}$ on spheres with the exception just mentioned. This is the aim of the next section.

\section{Classification of quaternionic structures preserving polar foliations}\label{sec:classification}
In this section we obtain a case-by-case classification of the quaternionic structures that preserve polar foliations on spheres, up to congruence of the projected foliations of the quaternionic projective space. We will do this for each one of the irreducible homogeneous polar foliations on spheres in \S\ref{subsec:class_homogeneous}, and for each FKM-foliation satisfying $m_+\leq m_-$ in \S\ref{subsec:FKM}.

First of all, note that most of the objects introduced in Section~\ref{sec:quaternionic_structures} (such as $K$, $\rho$, $\cal{S}$ and~$\cal{J}$) depend on the fixed foliation $\cal{F}$, although this dependence has been deleted from the notation to simplify the exposition. It is also important to observe that, as explained in \cite[\S3.1]{Dom:tams}, if we take $(G,K)$ to be an effective symmetric pair of compact type and rank higher than one, with $K$ connected, then $K$ turns out to be the maximal connected subgroup of $\SO(V)$ mapping each leaf of $\cal{F}_{G/K}$ to itself, where $V=T_{[K]}G/K$. Thus, the notation for the isotropy group is coherent with the definition of $K$ in Section~\ref{sec:quaternionic_structures}.

In the sequel we mention some arguments and ideas that will be used throughout our classification. The first observation is that the existence of a quaternionic structure preserving a foliation implies the existence of a complex structure preserving such foliation, as stated in Proposition~\ref{prop:2-sphere}. In~\cite{Dom:tams} it was shown that the only homogeneous polar foliations $\cal{F}_{G/K}$ that admit a complex structure preserving it are the ones induced by inner symmetric spaces $G/K$, that is, those for which $\mathrm{rank}\, G=\mathrm{rank}\, K$. Thus, $\cal{S}=\emptyset$ for polar foliations induced by non-inner symmetric spaces of 
rank higher than one.

Another restriction is of a dimensional nature: a foliation that admits a quaternionic structure must live in a sphere of dimension $4n+3$, for some $n\geq 1$; in particular, if it is a homogeneous foliation $\cal{F}_{G/K}$, then $\dim G/K\equiv 0\,(\mathrm{mod}\, 4)$.

If $G/K$ is an irreducible Hermitian symmetric space, then the center $Z(\g{k})$ of $\g{k}$ is one-dimensional. According to \cite{Dom:tams}, there is exactly one element $H$ in $\cal{J}\cap Z(\g{k})$ up to $\sim$-equivalence, where here $\cal{J}$ parametrizes the complex structures preserving the foliation $\cal{F}_{G/K}$. However, there is no Lie subalgebra of $\g{k}$ isomorphic to $\g{su}_2$ containing $H$. Thus, $\iota^{-1}[H]=\emptyset$. On the other hand, if $G/K$ is an irreducible quaternionic-K\"ahler symmetric space, then $\g{k}$ has a distinguished ideal isomorphic to $\g{su}_2$ satisfying condition (b) in Proposition~\ref{prop:sum_of_standard} (cf.\ comments before Theorem~\ref{th:homogeneity}); thus, this $\g{su}_2$-factor belongs to $\cal{S}$.

We also recall from \cite{Dom:tams} that an element $H$ in a maximal Abelian subalgebra $\g{t}$ of $\g{k}$ belongs to $\cal{J}$ if and only if $\lambda(H)\in\{\pm 1\}$ for every weight $\lambda$ of the complexification $\rho_*^\C$ of $\rho_*$. Note that, if $\rho_*$ is already a complex representation, $H\in\g{t}$ belongs to $\cal{J}$ if and only if $\lambda(H)\in\{\pm 1\}$ for every weight $\lambda$ of $\rho_*$.

Finally, the other fundamental tool we will often use refers to Dynkin's classification of 
conjugacy classes of $\g{su}_2$-subalgebras of a compact Lie algebra. In our situation it will be enough to consider the case of the compact Lie algebra $\g{u}_k$. It turns out that, given an element $X$ in the maximal Abelian subalgebra of diagonal matrices of $\g{u}_k$, if $X$ belongs to a subalgebra of $\g{u}_k$ isomorphic to $\g{su}_2$ then the multiset 
of elements in the diagonal of $X$ is invariant under multiplication by~$-1$.
This follows from the representation theory of $\g{su}_2$. We refer 
to~\cite[Chapter~3]{CoMc} for further information.

In what follows, we will use the following notation. Given a classical Lie algebra $\g{h}\in\{\g{so}_k,\g{u}_k,\g{sp}_k\}$ of rank $r$, we will denote by $\{e_1,\dots,e_r\}$ a canonical basis of the maximal Abelian subalgebra of $\g{h}$. We can and will assume that the maximal Abelian subalgebra consists of diagonal matrices for $\g{u}_k$ and $\g{sp}_k$, and by $2\times 2$-block diagonal matrices for $\g{so}_k$. Thus, if $\g{h}=\g{u}_k$, we take $e_j$, for each $j=1,\dots, r$, to be the matrix with all entries equal to zero with the exception of the entry $(j,j)$ which equals the imaginary unit~$i$; if $\g{h}=\g{so}_k$, we take $e_j$ to be the matrix with all entries equal to zero with the exception of the entries $(2j,2j-1)$ and $(2j-1,2j)$, which are equal to $1$ and $-1$, respectively; and if $\g h=\g{sp}_k$, we view this Lie algebra as a subalgebra 
of $\g{su}_{2k}$ and take $e_j$ to be the matrix with all entries
equal to zero except the entries $(j,j)$ and $(k+j,k+j)$, which are equal to
$i$ and $-i$, respectively. Moreover, 
we denote the dual basis of $\{e_1,\dots,e_r\}$  
by $\{\theta_1,\dots,\theta_r\}$. If we have a sum $\g{h}=\g{h}_1\oplus\g{h}_2$  of two classical algebras, we denote by $\{e_1,\dots,e_{r_1},e_1',\dots,e_{r_2}'\}$ the corresponding basis of the maximal Abelian subalgebra of $\g{h}$, and by $\theta_i$, $\theta_j'$, $i=1,2,\dots,r_1$, $j=1,2,\ldots,r_2$,
the corresponding dual elements.

For each polar foliation $\cal{F}$, we will write $N_{\cal{S}}=N_{\cal{S}}(\cal{F})$ to denote the cardinality of $\cal{S}/\!\sim$, and $N_{\cal{J}}=N_{\cal{J}}(\cal{F})$ for the cardinality of $\cal{J}/\!\sim$ whose values were determined in~\cite{Dom:tams}. 

\subsection{Projecting homogeneous polar foliations}\label{subsec:class_homogeneous}
In this subsection we determine the set $\cal{S}/\!\sim$ for each irreducible homogeneous polar foliation $\cal{F}_{G/K}$, where $G/K$ is an irreducible symmetric space of compact type and rank higher than one. As explained above, we can restrict ourselves to the study of inner symmetric spaces $G/K$, whose isotropy representations can be found in~\cite[Table~8.11.2]{Wolf}. We will run through the cases making use of the set $\cal{J}/\!\sim$ determined in \cite[\S5.3]{Dom:tams}. We label each case by Cartan's notation and by the corresponding orthogonal symmetric pair $(\g{g},\g{k})$.

\newpage
\medskip
{\bf Type A III:} $(\g{su}_{p+q},\g{s}(\g{u}_p\oplus\g{u}_q))$, $p, q\geq 2$.

The isotropy representation is the tensor product $\C^p\otimes_\C (\C^q)^*$ of standard representations. Its weights are $\theta_i-\theta_j'$, with $1\leq i\leq p$, $1\leq j\leq q$. The only elements in $\cal{J}$ up to $\sim$-equivalence are:
\begin{itemize}
\item $-\frac{q}{p+q}\sum_{i=1}^p e_i+ \frac{p}{p+q}\sum_{i=1}^q e_i'\in Z(\g{k})$,
\item $X_{p_1}=\sum_{i=1}^p a_i e_i+\frac{p_2-p_1}{p+q}\sum_{i=1}^q e_i'$, where $a_i=1+\frac{p_2-p_1}{p+q}$ for $i=1,\dots, p_1$, $a_i=-1+\frac{p_2-p_1}{p+q}$ for $i=p_1+1,\dots, p$, and  $p=p_1+p_2$ with $p_1\in\{1,\dots, [\frac{p}{2}]\}$,
\item $Y_{q_1}=\frac{q_2-q_1}{p+q}\sum_{i=1}^p e_i+\sum_{i=1}^q b_i e_i'$, where $b_i=1+\frac{q_2-q_1}{p+q}$ for $i=1,\dots, q_1$, $b_i=-1+\frac{q_2-q_1}{p+q}$ for $i=q_1+1,\dots, q$, and $q=q_1+q_2$ with $q_1\in\{1,\dots, [\frac{q}{2}]\}$.
\end{itemize}
Moreover, if $p=q$, then $X_{p_1}\sim Y_{p_1}$. The element in the center does not belong to any subalgebra of $\g{k}$ isomorphic to $\g{su}_2$. The element $X_{p_1}$ belongs to a subalgebra of $\g{k}$ isomorphic to $\g{su}_2$ if and only if $p_1=p_2$, since it is the only case for which the coefficients of the $e_i$ and $e_i'$ are symmetric with respect to zero. Similarly for $Y_{q_1}$. Thus, $N_\cal{S}=2$ if $p$ and $q$ are different even numbers, $N_\cal{S}=1$ if $p=q$ is even or if only one of $p$ and $q$ is even, and $N_\cal{S}=0$ if both $p$ and $q$ are odd.

\medskip
{\bf Type B I:} $(\g{so}_{2p+2q+1},\g{so}_{2p}\oplus\g{so}_{2q+1}))$, $p+q\geq 3$.
The isotropy representation is the tensor product $\R^{2p}\otimes \R^{2q+1}$ of standard representations. We know that $N_\cal{J}=1$. For dimension reasons, $N_\cal{S}=0$ if $p$ is odd. If $p=2p'$ is even, then we can embed a subalgebra $\g{s}$ isomorphic to $\g{su}_2$ into $\bigoplus_{i=1}^{p'}\g{so}_4\subset\g{so}_{2p}\subset\g{k}$ in a diagonal way. This subalgebra $\g{s}$ contains the element $\sum_{i=1}^{p}\varepsilon_i e_i\in\cal{J}$, where $\varepsilon_i=(-1)^i$. Therefore, $N_\cal{S}=1$ if $p$ is even, and $N_\cal{S}=0$ if $p$ is odd.

\medskip
{\bf Type C I:} $(\g{sp}_p,\g{u}_p)$, $p\geq 2$.

Since this is a Hermitian symmetric space and $N_\cal{J}=1$, we have that $N_\cal{S}=0$.

\medskip
{\bf Type C II:} $(\g{sp}_{p+q},\g{sp}_p\oplus\g{sp}_{q})$, $p,q\geq 2$.

The isotropy representation is the tensor product $\H^p\otimes_\H \H^q$ of standard representations. Any diagonally embedded $\g{s}_p\cong\g{sp}_1\subset\bigoplus_{i=1}^p\g{sp}_1\subset\g{sp}_p$ belongs to $\cal{S}$, since it satisfies the conditions in Proposition~\ref{prop:sum_of_standard}. Similarly, for the other factor $\g{sp}_q$ we obtain an $\g{s}_q\in\cal{S}$. If $p\neq q$, then there is no automorphism of $\g{k}=\g{sp}_p\oplus\g{sp}_q$ mapping $\g{s}_p$ to $\g{s}_q$. If $p\neq q$, then $N_\cal{J}=2$ and so $N_\cal{S}=2$. If $p= q$, we know that $N_\cal{J}=1$, and hence $N_\cal{S}=1$. 

\medskip
{\bf Type D I:} $(\g{so}_{2p+2q},\g{so}_{2p}\oplus\g{so}_{2q})$, $p+q\geq 4$, $q\geq 2$.

The isotropy representation is the tensor product $\R^{2p}\otimes \R^{2q}$ of standard representations. We know that $N_\cal{J}=2$ if $p\neq q$ and $N_\cal{J}=1$ if $p=q$. The only elements in $\cal{J}$ up to $\sim$-equivalence are $X=\sum_{i=1}^p e_i$ and $Y=\sum_{i=1}^q e_i'$, where $X\sim Y$ if $p=q$. If $X$ belongs to a subalgebra $\g{s}$ of $\g{k}$ isomorphic to $\g{su}_2$, then $\g{s}$ is contained in the factor $\g{so}_{2p}$ to which $X$ belongs (since $[X,\g s]\subset\g{so}_{2p}$). The inclusion $\g{s}\subset \g{so}_{2p}$ gives rise to a representation of $\g{su}_2$ on $\R^{2p}$, with weights $\pm \theta$, where $\theta(X)=1$. Thus, this representation is the direct sum of standard representations of $\g{su}_2$. In particular, $p$ must be even. Analogously, we deduce that $Y$ belongs to a subalgebra of $\g{k}$ isomorphic to $\g{su}_2$ if and only if $q$ is even. We conclude that $N_\cal{S}=2$ if $p\neq q$ are even numbers, $N_\cal{S}=1$ if $p=q$ is even or if exactly one of $p$ and $q$ is even, and $N_\cal{S}=0$ if $p$ and $q$ are odd.

\medskip
{\bf Type D III:} $(\g{so}_{2p},\g{u}_p)$, $p\geq 4$.

The isotropy representation is the alternating square $\Lambda^2 \C^p$ of the standard representation of $\g{u}_{p}$. Its weights are $\theta_i+\theta_j$, for $1\leq i<j\leq p$. We know that $N_\cal{J}=2$. Indeed, the only elements in $\cal{J}$ up to $\sim$-equivalence are $\frac{1}{2}(3e_1-\sum_{i=2}^p e_i)$ and $\frac{1}{2}\sum_{i=1}^p e_i$. The first element cannot belong to any subalgebra $\g{s}$ of $\g{su}_p$ isomorphic to $\g{su}_2$, since the coefficients of the $e_i$ are not symmetric with respect to $0$. Finally, $\frac{1}{2}\sum_{i=1}^p e_i\in Z(\g{k})$ does not belong to any subalgebra of $\g{k}$ isomorphic to $\g{su}_2$. Therefore $N_\cal{S}=0$.

\medskip
{\bf Type E II:} $ (\g{e}_6, \g{su}_6\oplus\g{su}_2)$.

The isotropy representation is $\Lambda^3\C^6 \otimes_\H \H$. Its weights are $\theta_i+\theta_j+\theta_k+\theta'_l$, where $1\leq i< j < k \leq 6$, $1\leq l\leq 2$. Then, the only elements in $\cal{J}$ up to $\sim$-equivalence are $\frac{1}{3}(5e_1-\sum_{i=2}^6 e_i)\in\g{su}_6$ and $e_1'-e_2'\in\g{su}_2$. The second one corresponds to the $\g{su}_2$-factor in $\g{k}$, which therefore belongs to $\cal{S}$. The first one, however, cannot belong to any subalgebra $\g{s}$ of $\g{su}_6$ isomorphic to $\g{su}_2$. Hence, $N_\cal{S}=1$.

\medskip
{\bf Type E III:} $(\g{e}_6, \g{so}_{10}\oplus\g{u}_1)$.

The  isotropy representation is the tensor product $\C^{16}\otimes_\C \C$ of the half-spin representation of $\g{so}_{10}$ and the standard representation of $\g{u}_1$. This is a Hermitian symmetric space and $N_\cal{J}=2$. Any canonically immersed subalgebra $\g{s}\cong\g{so}_3$ in $\g{so}_{10}$ is in the conditions of Proposition~\ref{prop:sum_of_standard}, since the restriction $\rho_*\rvert_{\g{s}}$ is a direct sum of spin representations of $\g{so}_3$. This $\g{s}$ yields the only element in $\cal{S}/\!\sim$, so $N_\cal{S}=1$.

\medskip
{\bf Type E V:} $(\g{e}_7, \g{su}_8)$.

The isotropy representation is $[\Lambda^4\C^8]_\R$. Its weights are $\theta_i+\theta_j+\theta_k+\theta_l$, where $1\leq i< j < k < l \leq 8$. Then, the only element in $\cal{J}$ up to $\sim$-equivalence is $\frac{1}{4}(7e_1-\sum_{i=2}^8 e_i)$. But this cannot belong to any subalgebra $\g{s}$ of $\g{k}\cong\g{su}_8$ isomorphic to $\g{su}_2$. Hence, $N_\cal{S}=0$.

\medskip
{\bf Type E VI:} $(\g{e}_7, \g{so}_{12}\oplus\g{su}_2)$.

The isotropy representation is the tensor product $\H^{16}\otimes_\H \H$ of a half-spin representation of $\g{so}_{12}$ and the standard representation of $\g{su}_2$. We have that $N_{\cal{J}}=2$. The $\g{su}_2$-factor in $\g{k}$ gives one element of $\cal{S}$. Any canonically embedded subalgebra $\g{s}\cong\g{so}_3$ in $\g{so}_{12}$ is in the conditions of Proposition~\ref{prop:sum_of_standard}, since the restriction $\rho_*\rvert_{\g{s}}$ is a direct sum of spin representations of $\g{so}_3$. Hence such an $\g{s}\subset\g{so}_{12}$ yields the other element in $\cal{S}/\!\sim$. Thus $N_{\cal{S}}=2$.

\medskip
{\bf Type E VII:} $(\g{e}_7, \g{e}_6\oplus\g{so}_2)$.

Since this is a Hermitian symmetric space and $N_\cal{J}=1$, we have that $N_\cal{S}=0$.
\medskip

{\bf Type E VIII:} $(\g{e}_8, \g{so}_{16})$.

The isotropy representation is the half-spin representation $\R^{128}$ of $\g{so}_{16}$. Hence, any canonically embedded subalgebra $\g{s}\cong\g{so}_3$ in $\g{so}_{16}$ is in the conditions of Proposition~\ref{prop:sum_of_standard}. Since $N_\cal{J}=1$, we deduce that $N_\cal{S}=1$.

\medskip
{\bf Type E IX:} $(\g{e}_8, \g{e}_7\oplus \g{su}_2)$.

This is a quaternionic-K\"ahler symmetric space, and $N_\cal{J}=1$. Hence $\cal{S}$ only contains the $\g{su}_2$-factor in $\g{k}$, and thus $N_\cal{S}=1$.

\medskip

{\bf Type F I:} $(\g{f}_4, \g{sp}_3\oplus\g{su}_2)$.

Same as in the previous case. Thus, $N_\cal{S}=1$.

\medskip
{\bf Type G:} $(\g{g}_2, \g{su}_2\oplus\g{su}_2)$.

Analogous to the previous two cases, $N_\cal{S}=1$. Note that the restriction of the isotropy representation to the distinguished $\g{su}_2$-factor is $2\,\C^2$, 
but to the other one is the symmetric cube $\mathrm{Sym}^3(\C^2)$ of the standard representation $\C^2$, so we get only one quaternionic structure.

\subsection{Projecting FKM-foliations}\label{subsec:FKM}
Next we will determine the set $\cal{S}/\sim$ for each FKM-foliation $\cal{F}_\cal{P}$ satisfying $m_+\leq m_-$. We start by briefly recalling some facts about FKM-foliations, and refer the reader to~\cite{FKM} or to~\cite{Dom:tams} for details. 

A symmetric Clifford system on $\R^{2l}$ is an $(m+1)$-tuple $(P_0,\dots,P_m)$ of symmetric matrices of order $2l$ satisfying the Clifford relations $P_i P_j+P_j P_i=2\delta_{ij} \Id$, for all $i,j=0,\dots, m$, where $\delta_{ij}$ is the Kronecker delta. Each symmetric Clifford system determines an FKM-foliation $\cal{F}_\cal{P}$, which only depends on the $(m+1)$-dimensional vector space of symmetric matrices $\cal{P}=\mathrm{span}\{P_0,\dots,P_m\}$. The regular leaves of $\cal{F}_\cal{P}$ are hypersurfaces with $g=4$ constant principal curvatures with multiplicities $(m_+,m_-)=(m,l-m-1)$. If $\Cl^*_{m+1}$ denotes the Clifford algebra of $\R^{m+1}$ with positive definite quadratic form, it turns out that there is exactly one equivalence class $\g{d}$ of irreducible $\Cl^*_{m+1}$-modules if $m\not\equiv 0 \,(\mathrm{mod}\, 4)$, and two equivalence classes $\g{d}_+$, $\g{d}_-$ if $m\equiv 0\,(\mathrm{mod}\, 4)$. Thus, the Clifford system $(P_0,\dots,P_m)$ determines a representation of $\Cl^*_{m+1}$ on $\R^{2l}$ which is equivalent to $\oplus_{i=1}^k \g{d}$ for some $k$ if $m\not\equiv 0 \,(\mathrm{mod}\, 4)$, or to $(\oplus_{i=1}^{k_+}\g{d}_+)\oplus(\oplus_{i=1}^{k_-}\g{d}_-)$ for some $k_+$, $k_-$ if $m\equiv 0\,(\mathrm{mod}\, 4)$. The integer $k$ and the set $\{k_+,k_-\}$ only depend on $\cal{P}$.

In~\cite[\S3.2]{Dom:tams} the automorphism group and the associated representation $\rho_*$ was calculated for all FKM-foliations satisfying $m_+\leq m_-$. We will make use of this description of $\rho_*$, which varies depending on $m$ $(\mathrm{mod}\,8)$. Thus, in order to determine the set $\cal{S}/\sim$ for such foliations, we will distinguish several cases depending on $m$ $(\mathrm{mod}\,8)$. The moduli space of complex structures $\cal{J}/\!\sim$ has been determined in~\cite[\S6.1]{Dom:tams}. We will denote by $\delta(m)$ half of the real dimension of any irreducible representation of the Clifford algebra $\Cl^*_{m+1}$; in particular, $\delta(1)=1$, $\delta(2)=2$ and $\delta(m)\equiv 0 \,(\mathrm{mod}\, 4)$ for any $m\geq 3$. Moreover, $l=k\delta(m)$ if $m\not\equiv 0 \,(\mathrm{mod}\, 4)$, and $l=(k_++k_-)\delta(m)$ if $m\equiv 0 \,(\mathrm{mod}\, 4)$. We also let $p$ be such that $m+1=2p$ if $m+1$ is even, and $m+1=2p+1$ if $m+1$ is odd.

\medskip

{\bf Type $m\equiv 0\,(\mathrm{mod}\,8)$.}

$\rho_*$ is the direct sum $(\R^{2\delta(m)}\otimes_\R \R^{k_+})\oplus(\R^{2\delta(m)}\otimes_\R \R^{k_-})$ of the tensor products of the spin representation of $\g{so}_{m+1}$ and the standard representations of $\g{so}_{k_\pm}$. The weights of $\rho_*^\C$ are $\frac{1}{2}(\pm \theta_1\pm\dots\pm\theta_p)\pm\theta'^+_j$, $j=1,\dots,q_+$, and $\frac{1}{2}(\pm \theta_1\pm\dots\pm\theta_p)\pm\theta'^-_j$, $j=1,\dots,q_-$, where $q_\pm$ are given by $k_\pm=2q_\pm$ if $k_\pm$ is even, or by $k_\pm=2q_\pm+1$ if $k_\pm$ is odd, and additionally the weight $\frac{1}{2}(\pm \theta_1\pm\dots\pm\theta_p)$ if $k_+$ or $k_-$ is odd. 
The only elements in $\cal{J}$ up to $\sim$-equivalence are $2e_1\in\g{so}_{m+1}$ and, if both $k_+$ and $k_-$ are even, $Y=\sum_{i=1}^{q^+} e_i'^+ + \sum_{i=1}^{q^-} e_i'^-\in\g{so}_{k_+}\oplus\g{so}_{k_-}$. The first element always belongs to a subalgebra of $\g{so}_{m+1}$ isomorphic to $\g{so}_3\cong\g{su}_2$. If $Y$ belongs to a subalgebra of $\g{so}_{k_+}\oplus\g{so}_{k_-}$ isomorphic to $\g{su}_2$, this  determines a representation $\R^{k_+}\oplus\R^{k_-}$ of $\g{su}_2$ with weights $\pm\theta$, where $\theta(Y)=1$. Then, such representation is a sum of standard representations. Since both $\R^{k_+}$ and $\R^{k_-}$ are invariant subspaces, we deduce that $k_+$ and $k_-$ are multiples of $4$. In this case, the existence of a subalgebra of $\g{so}_{k_+}\oplus\g{so}_{k_-}$ isomorphic to $\g{su}_2$ and containing $Y$ is clear. 
In conclusion, $N_\cal{S}=2$ if $k_+, k_-\equiv 0\,(\mathrm{mod}\,4)$, and $N_\cal{S}=1$ if $k_+$ or $k_-\not\equiv 0\,(\mathrm{mod}\,4)$.

\medskip
{\bf Type $m\equiv 1,7\,(\mathrm{mod}\,8)$.}

$\rho_*$ is the tensor product $\R^{2\delta(m)}\otimes_\R \R^k$ of the spin representation of $\g{so}_{m+1}$ and the standard representation of $\g{so}_k$. The weights of $\rho_*^\C$ are $\frac{1}{2}(\pm \theta_1\pm\dots\pm\theta_p)\pm\theta'_j$, $j=1,\dots,q$, where $q$ is given by $k=2q$ if $k$ is even, and by $k=2q+1$ if $k$ is odd, together with $\frac{1}{2}(\pm \theta_1\pm\dots\pm\theta_p)$ if $k$ is odd. The only elements in $\cal{J}$ up to $\sim$-equivalence are $2e_1\in\g{so}_{m+1}$ and, if $k$ is even, $Y=\sum_{i=1}^q e_i'\in\g{so}_k$. The first element belongs to a subalgebra of $\g{so}_{m+1}$ isomorphic to $\g{so}_3\cong\g{su}_2$, whenever $m>1$. If $Y$ belongs to a subalgebra of $\g{so}_k$ isomorphic to $\g{su}_2$, this would determine a representation of $\g{su}_2$ with weights $\pm\theta$, where $\theta(Y)=1$, and thus such representation would be a sum of standard representations. In particular, $k$ would be multiple of $4$. In this case, the existence of a subalgebra of $\g{so}_k$ isomorphic to $\g{su}_2$ and containing $Y$ is clear. In conclusion, $N_\cal{S}=2$ if $m>1$ and $k\equiv 0\,(\mathrm{mod}\,4)$, $N_\cal{S}=1$ if $m>1$ and $k\not\equiv 0\,(\mathrm{mod}\,4)$, or if $m=1$ and $k\equiv 0\,(\mathrm{mod}\,4)$, and $N_\cal{S}=0$ if $m=1$ and $k\not\equiv 0\,(\mathrm{mod}\,4)$.

\medskip
{\bf Type $m\equiv 2,6\,(\mathrm{mod}\,8)$.}

$\rho_*$ is the tensor product $\C^{\delta(m)}\otimes_\C \C^k$ of the spin representation of $\g{so}_{m+1}$ and the standard representation of $\g{u}_k$. The weights of $\rho_*^\C$ are $\frac{1}{2}(\pm \theta_1\pm\dots\pm\theta_p)\pm\theta'_j$, $j=1,\dots,k$. The only elements in $\cal{J}$ up to $\sim$-equivalence are $2e_1\in\g{so}_{m+1}$ and $Y_r=\sum_{i=1}^k \varepsilon_i e_i'\in\g{u}_k$, where $\varepsilon_i=1$ for $i=1,\dots, r$, $\varepsilon_i=-1$ for $i=r+1,\dots, k$, and $r=0, \dots, \left[\frac{k}{2}\right]$. The first element always belongs to a subalgebra of $\g{so}_{m+1}$ isomorphic to $\g{so}_3\cong\g{su}_2$. Among the vectors $Y_r\in\cal{J}$, the only one that belongs to a subalgebra of $\g{u}_k$ isomorphic to $\g{su}_2$ is $Y_{k/2}$ for $k$ even. Therefore, $N_\cal{S}=2$ if $k$ is even, and $N_\cal{S}=1$ if $k$ is odd.

\medskip
{\bf Type $m\equiv 3,5\,(\mathrm{mod}\,8)$.}

$\rho_*$ is the tensor product $\H^{\delta(m)/2}\otimes_\H \H^k$ of the spin representation of $\g{so}_{m+1}$ and the standard representation of $\g{sp}_k$. The weights of $\rho_*^\C$ are $\frac{1}{2}(\pm \theta_1\pm\dots\pm\theta_p)\pm\theta'_j$, $j=1,\dots,k$. The only elements in $\cal{J}$ up to $\sim$-equivalence are $2e_1\in\g{so}_{m+1}$ and $Y=\sum_{i=1}^k e_i'\in\g{sp}_k$. The first element always belongs to a subalgebra of $\g{so}_{m+1}$ isomorphic to $\g{so}_3\cong\g{su}_2$, whereas $Y$ belongs to a diagonally embedded subalgebra $\g{sp}_1$ of $\g{sp}_k$. Therefore, $N_\cal{S}=2$.

\medskip
{\bf Type $m\equiv 4\,(\mathrm{mod}\,8)$.}

$\rho_*$ is the direct sum $(\H^{\delta(m)/2}\otimes_\H \H^{k_+})\oplus(\H^{\delta(m)/2}\otimes_\H \H^{k_-})$ of the tensor products of the spin representation of $\g{so}_{m+1}$ and the standard representations of $\g{sp}_{k_\pm}$. The weights of $\rho_*^\C$ are $\frac{1}{2}(\pm \theta_1\pm\dots\pm\theta_p)\pm\theta'^+_j$, $j=1,\dots,k_+$, and $\frac{1}{2}(\pm \theta_1\pm\dots\pm\theta_p)\pm\theta'^-_j$, $j=1,\dots,k_-$. The only elements in $\cal{J}$ up to $\sim$-equivalence are $2e_1\in\g{so}_{m+1}$ and $Y=\sum_{i=1}^{k_+} e_i'^+ + \sum_{i=1}^{k_-}e_i'^-\in\g{sp}_{k_+}\oplus\g{sp}_{k_-}$. The first element always belongs to a subalgebra of $\g{so}_{m+1}$ isomorphic to $\g{so}_3\cong\g{su}_2$, whereas $Y$ belongs to a diagonal subalgebra $\g{sp}_1$ of $\g{sp}_{k_+}\oplus\g{sp}_{k_-}$. Hence, $N_\cal{S}=2$.

\section{Proofs of Theorems~\ref{th:main1} and~\ref{th:main2}}\label{sec:proofs}
The study carried out in the previous section allows us to conclude the proofs of Theorems~\ref{th:main1} and~\ref{th:main2} stated in the introduction. We start with an easy observation about the irreducibility of the foliations.

\begin{remark}\label{rem:irreducible}
We say that a polar foliation $\cal{G}$ on $\H P^n$ is irreducible if there is no proper totally geodesic quaternionic projective subspace $\H P^k$, $k\in\{0,\dots,n-1\}$, in $\H P^n$ which is foliated by leaves of $\cal{G}$. It is known that a polar foliation $\cal{F}$ on a sphere is indecomposable as a spherical join if and only if it is irreducible, in the sense that there is no proper totally geodesic subspace foliated by leaves of $\cal{F}$. If $\cal{F}=\pi^{-1}\cal{G}$ is the pull-back of a foliation on $\H P^n$, then any proper totally geodesic subspace of $S^{4n+3}$ foliated by leaves of $\cal{F}$ must be foliated as well by the fibers of the Hopf fibration $\pi$ (cf.~Proposition~\ref{prop:product}). Thus, a polar foliation $\cal{G}$ on $\H P^n$ is irreducible if and only if its pull-back $\pi^{-1}\cal{G}$ is irreducible in $S^{4n+3}$. Moreover, Proposition~\ref{prop:product} reduces the study of reducible polar foliations on $\H P^n$ to the irreducible case.
\end{remark}

\begin{proof}[Proof of Theorem~\ref{th:main1}]
The first claim follows from the calculation of $N_\cal{S}$ carried out in~\S\ref{subsec:class_homogeneous}. 

Now let $\cal{G}$ be an irreducible polar foliation with codimension at least two on $\H P^n$. By Proposition~\ref{prop:Hopf} and Remark~\ref{rem:irreducible}, the pull-back $\pi^{-1}\cal{G}$ of $\cal{G}$ under any Hopf map $\pi\colon S^{4n+3}\to \H P^n$ is an irreducible polar foliation of codimension at least two on the sphere $S^{4n+3}$. By Thorbergsson's theorem~\cite{Th:annals}, $\pi^{-1}\cal{G}$ is congruent to the orbit foliation $\cal{F}_{G/K}$ of the isotropy representation $K\times T_{[K]}G/K\to T_{[K]}G/K$ of an irreducible symmetric space $G/K$. Equivalently, there exists a quaternionic structure $\g{q}$ on $T_{[K]}G/K$ preserving $\cal{F}_{G/K}$ such that $\cal{G}$ is congruent to the projection of $\cal{F}_{G/K}$ by the Hopf map induced by $\g{q}$. By the study in \S\ref{subsec:class_homogeneous} the only symmetric spaces $G/K$ that admit such a quaternionic structure are the ones in Table~\ref{table:main1}. This proves the second claim. 

Finally, the assertions about the homogeneity of the projected foliations will follow from Theorem~\ref{th:homogeneity} in Section~\ref{sec:homogeneity}.
\end{proof}

\begin{proof}[Proof of Theorem~\ref{th:main2}]
The first claim summarizes the study of the quaternionic structures preserving FKM-foliations with $m_+\leq m_-$ developed in \S\ref{subsec:FKM}.

Now let $\cal{G}$ be a polar foliation of codimension one on $\H P^n$. Let $\cal{F}$ be the pull-back of $\cal{G}$ under any Hopf map. By Proposition~\ref{prop:Hopf}, $\cal{F}$ is a polar foliation of codimension one on $S^{4n+3}$. By M\"unzner's structure results for codimension one polar foliations on spheres~\cite{Mu1,Mu2}, all regular leaves of $\cal{F}$ are isoparametric hypersurfaces with the same number $g\in\{1,2,3,4,6\}$ of constant principal curvatures, and their multiplicities satisfy $m_i = m_{i+2}$ (indices modulo~$g$). If $g\in\{1,2,3\}$, the corresponding foliation is homogeneous according to Cartan's results~\cite{Cartan}. If $g=6$, Abresch~\cite{Abresch} showed that $m_1=m_2\in\{1,2\}$. On the one hand, Dorfmeister and Neher~\cite{DN} proved the homogeneity of the foliations with $(g,m_1,m_2)=(6,1,1)$; on the other hand $\cal{F}$ cannot satisfy $(g,m_1,m_2)=(6,2,2)$, since the corresponding foliation would live in the sphere $S^{13}$, and $13\not\equiv 3$ (mod $4$). Finally, if $\cal{F}$ satisfies $g=4$, the classification results by Cecil, Chi, Jensen~\cite{CCJ}, Immervoll~\cite{Im} and Chi~\cite{Chi} imply that $\cal{F}$ is homogeneous or of FKM-type, except maybe if the hypersurfaces of $\cal{F}$ satisfy $(g,m_1,m_2)=(4,7,8)$. By Dadok's work~\cite{Dadok}, if $\cal{F}$ is homogeneous, it is of the type $\cal{F}_{G/K}$ for some symmetric space $G/K$ of rank $2$.

If $\cal{F}$ is an FKM-foliation with $m_+=1$, then it is homogeneous~\cite[\S6.1]{FKM}. Now, up to congruence, there are exactly eight FKM-foliations with $m_+>m_-$ (see~\cite[\S4.3 and \S5.5]{FKM} or~\cite[\S3.2]{Dom:tams}). However, each one of these foliations is either homogeneous or congruent to another FKM-foliation with $m_+\leq m_-$, except for two examples with $(m_+,m_-)=(8,7)$.

Altogether, we have that $\cal{F}$ is homogeneous of the type $\cal{F}_{G/K}$, or an FKM-foliation with $2\leq m_+\leq m_-$ or an inhomogeneous foliation on $S^{31}$ whose hypersurfaces satisfy $(g,m_1,m_2)=(4,7,8)$. This implies the second claim in the theorem.

The last assertion of Theorem~\ref{th:main2} is easy (cf.~Section~\ref{sec:homogeneity}).
\end{proof}

\section{Inhomogeneous polar foliations}\label{sec:homogeneity}
In this section we derive the existence of inhomogeneous polar foliations on quaternionic projective spaces.

A polar foliation is homogeneous if it is the orbit foliation of a polar action. Podest\`a and Thorbergsson~\cite{PT:jdg} classified polar actions on quaternionic projective spaces up to orbit equivalence. In the first part of this section, we revisit their result in our context.

An elementary observation is that each homogeneous polar foliation on $\H P^n$ must be the projection under the Hopf map of a homogeneous polar foliation $\cal{F}_{G/K}$ on $S^{4n+3}$. Our first aim is to determine when the projection to $\H P^n$ of a homogeneous polar foliation $\cal{F}_{G/K}$ on $S^{4n+3}$ is homogeneous. We start with a basic lemma and  use the 
notation of Section~\ref{sec:quaternionic_structures}.

\begin{lemma}\label{lemma:normalizer}
Let $\cal{F}$ be a closed foliation on $S(V)$, $\g{s}\in\cal{S}$ and $\pi\colon S(V)\to \H P^n$ the Hopf map associated with the quaternionic structure $\rho_*(\g{s})$. Let $S$ be the connected subgroup of $K$ with Lie algebra $\g{s}$ and $N_K^0(S)$ the identity connected component of the normalizer of $S$ in~$K$.

Then $\pi(\cal{F})$ is a homogeneous foliation on $\H P^n$ if and only if the orbit foliation of the action of $\rho(N_K^0(S))$ on $S(V)$ coincides with $\cal{F}$. 
\end{lemma}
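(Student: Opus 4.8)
The plan is to prove the two implications separately, resting on two structural facts that I would state at the outset. First, because $\g q=\rho_*(\g s)$ preserves $\cal F$, every leaf of $\cal F$ is a union of Hopf fibers, so $\cal F=\pi^{-1}\pi(\cal F)$; equivalently, the leaves of $\cal F$ are exactly the $\pi$-preimages of the leaves of $\pi(\cal F)$. Second, the connected isometry group $\mathrm{Isom}^0(\H P^n)$ is covered by $\Sp_{n+1}\subset\SO(V)$, which centralizes $Q$; consequently any connected group of isometries of $\H P^n$ lifts to a connected subgroup of $\SO(V)$ that centralizes (in particular normalizes) $Q$ and projects onto it under $\pi$. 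I would also record immediately that $\rho(N_K^0(S))$ normalizes $Q=\rho(S)$, so each of its elements $A$ satisfies $A\g q A^{-1}=\g q$ and hence descends to an isometry of $\H P^n$.

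For the implication ($\Leftarrow$), assume the orbit foliation of $\rho(N_K^0(S))$ equals $\cal F$. Since $N_K^0(S)\supseteq S$ we have $\rho(N_K^0(S))\supseteq Q$, so $\rho(N_K^0(S))$ descends to a connected group $\bar H$ of isometries of $\H P^n$ whose orbits are the $\pi$-images of the $\rho(N_K^0(S))$-orbits. As these orbits are the leaves of $\cal F$, the $\bar H$-orbits are precisely the leaves of $\pi(\cal F)$, and hence $\pi(\cal F)$ is homogeneous.

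For the converse, suppose $\pi(\cal F)$ is homogeneous, realized as the orbit foliation of a connected group $\bar H\subset\mathrm{Isom}^0(\H P^n)$. Using the second fact, I would lift $\bar H$ to a connected subgroup $\hat H\subset\SO(V)$ centralizing $Q$ and surjecting onto $\bar H$; then $\hat H Q$ is a connected subgroup of $\SO(V)$. A short computation using that $\hat H$ commutes with $Q$ and surjects onto $\bar H$ gives $\pi^{-1}(\bar H\cdot\pi(p))=(\hat H Q)\cdot p$ for all $p\in S(V)$. Since the $\pi$-preimage of a leaf of $\pi(\cal F)$ is a leaf of $\cal F$, the orbit foliation of $\hat H Q$ equals $\cal F$; in particular every element of $\hat H Q$ maps each leaf of $\cal F$ to itself, so the maximality of $\rho(K)$ forces $\hat H Q\subseteq\rho(K)$. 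Because $\rho$ is effective, hence injective, I would pull $\hat H Q$ back to a connected subgroup $L'$ of $K$ with $\rho(L')=\hat H Q$. Then $S\subseteq L'$, and as $\hat H Q$ normalizes $Q=\rho(S)$, injectivity of $\rho$ shows $L'$ normalizes $S$, whence $L'\subseteq N_K^0(S)$. Finally, since $L'\subseteq N_K^0(S)\subseteq K$, each leaf of $\cal F$—being a $\rho(L')$-orbit—is contained in a $\rho(N_K^0(S))$-orbit, which in turn lies inside a leaf of $\cal F$ because $\rho(N_K^0(S))\subseteq\rho(K)$ preserves every leaf; hence the $\rho(N_K^0(S))$-orbit foliation coincides with $\cal F$.

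The main obstacle is the converse direction, and specifically the lifting step: one must guarantee that the abstract homogeneous structure on $\H P^n$ can be lifted to $S(V)$ compatibly with the Hopf action, that is, to a subgroup of $\SO(V)$ normalizing $Q$. This is precisely what links the homogeneity of $\pi(\cal F)$ to the normalizer $N_K^0(S)$, and it relies on the identification of $\mathrm{Isom}^0(\H P^n)$ with $\Sp_{n+1}/\{\pm\Id\}$ together with the maximality property defining $\rho(K)$ and the effectiveness of $\rho$.
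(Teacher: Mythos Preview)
Your proof is correct, and the overall structure is sound. The two directions are handled cleanly, and the sandwiching argument $\rho(L')\subseteq\rho(N_K^0(S))\subseteq\rho(K)$ at the end is a nice way to finish. One small wording issue: in your ``second fact'' the phrase ``projects onto it under $\pi$'' is garbled (you mean the lift surjects onto the original group of isometries under the quotient $\Sp_{n+1}\to\Sp_{n+1}/\{\pm\Id\}$, not under the Hopf map), but the intended meaning is clear and you use it correctly later.

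Your route differs from the paper's in the converse direction. The paper argues more abstractly: any isometry of $\H P^n$ lifts to some $A\in\O(V)$ permuting the Hopf fibers, hence $A\rho(S)A^{-1}$ has the same orbits as $\rho(S)$; they then invoke a result from~\cite{GL} to conclude that $\rho(S)$ is the \emph{maximal} connected subgroup of $\SO(V)$ acting with the Hopf fibers as orbits, which forces $A\rho(S)A^{-1}=\rho(S)$, i.e.\ $A\in N_{\O(V)}(\rho(S))$. From this they directly identify the maximal connected group of isometries of $\H P^n$ preserving each leaf of $\pi(\cal F)$ as (the image of) $\rho(N_K^0(S))$. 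You instead use the explicit identification $\mathrm{Isom}^0(\H P^n)\cong\Sp_{n+1}/\{\pm\Id\}$ and lift $\bar H$ into the \emph{centralizer} $\Sp_{n+1}$ of $Q$, bypassing the maximality argument and the external reference entirely. Your approach is more concrete and self-contained; the paper's is more uniform (it never names $\Sp_{n+1}$) and yields the slightly stronger statement that $\rho(N_K^0(S))$ is exactly the maximal connected leaf-preserving group downstairs, not merely one that works.
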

\begin{proof}
Each isometry $[A]$ of $\H P^n$ is induced by an isometry $A\in \O(V)$ of $S(V)$ that maps orbits of $\rho(S)$ to orbits of $\rho(S)$. Equivalently, such an $A$ satisfies $A \rho(S) A^{-1}x=\rho(S)x$ for all $x\in S(V)$. This means that the group $A \rho(S) A^{-1}$ acts with the same orbits as $\rho(S)$. 

We claim that $\rho(S)$ is the maximal connected subgroup of $\SO(V)$ acting with the fibers of $\pi$ as orbits. Suppose the contrary. Then the larger group
would act with nontrivial principal isotropy group and yield a reduction
of the $\rho(S)$-action (cf.~\cite[\S2.3]{GL} or~\cite[\S2.6]{GL2}), 
contradicting~\cite[Proposition~1.1]{GL}. We deduce from the claim
that $A \rho(S) A^{-1}=\rho(S)$, i.e.\ $A\in N_{\O(V)}(\rho(S))$.

Thus, the maximal connected subgroup of isometries of $\H P^n$ that leave invariant each leaf of $\pi(\cal{F})$ is the identity connected component of $\rho(K)\cap N_{\O(V)}(\rho(S))$, which is $\rho(N_K^0(S))$. This implies the lemma.
\end{proof}

It is appropriate to recall here that a Riemannian manifold of dimension $4k$ is called quaternionic-K\"ahler if its holonomy group is a subgroup of $\Sp_k\cdot\Sp_1$. A symmetric space $G/K$ without Euclidean factor is quaternionic-K\"ahler if and only if it is irreducible and its isotropy group $K$ contains a normal subgroup isomorphic to $\SU_2=\Sp_1$ whose isotropy representation is equivalent to the representation of the normal subgroup $\Sp_1$ of $\Sp_k\cdot\Sp_1$ when considered with its standard action on $\H^k$. We refer to the Lie algebra of this normal subgroup of $K$ as the distinguished $\g{su}_2$-factor of $\g{k}$. For more details, see~\cite[\S14.E]{Besse:Einstein}.

We also recall that when we deal with a homogeneous polar foliation $\cal{F}_{G/K}$ induced by an effective symmetric pair $(G,K)$ of compact type, with $K$ connected, we can regard the representation $\rho$ as the adjoint representation $\Ad\colon K\to \O(\g{p})$, and then $\rho_*=\ad$ is the adjoint representation on the Lie algebra level. Here $\g{p}$ is the orthogonal complement of $\g{k}$ in $\g{g}$ with respect to the Killing form of $\g{g}$. Thus, $\g{g}=\g{k}\oplus\g{p}$ is the decomposition into eigenspaces of the involution induced
by the geodesic symmetry of $G/K$. 

We can now provide a different approach to the proof of the classification of homogeneous polar foliations on $\H P^n$ due to Podest\`a and Thorbergsson~\cite{PT:jdg}. Our arguments are based on the classification of quaternionic structures obtained in~\S\ref{subsec:class_homogeneous} and on the explicit description of all connected groups acting polarly on spheres (due to Eschenburg and Heintze~\cite{EH:mz} for the irreducible case, and to Bergmann~\cite{Be:mm} and Fang, Grove and Thorbergsson~\cite{FGT} for the reducible case).

\begin{theorem}\label{th:homogeneity}
Let $(G,K)$ be a compact, effective symmetric pair, $(G,K)=\prod_{i=1}^r(G_i,K_i)$ its decomposition in irreducible factors, and $\g{g}_i=\g{k}_i\oplus\g{p}_i$ the decomposition 
into eigenspaces of the involution associated to $G_i/K_i$. Let $\g q=\ad(\g{s})\rvert_\g{p}$ be a quaternionic structure on $\g{p}=\bigoplus_{i=1}^r\g{p}_i$ that preserves the foliation $\cal{F}_{G/K}$, where $\g{s}\cong \g{su}_2$ is a subalgebra of $\g{k}$. Then, the following conditions are equivalent:
\begin{enumerate}[{\rm (i)}]
\item The projection of $\cal{F}_{G/K}$ to the quaternionic projective space $\H P^n$ determined by $\g q$ is a homogeneous foliation.

\item All but maybe one of the irreducible factors of $G/K$ have rank one, and the possible exception $G_j/K_j$, for some $j\in\{1,\dots,r\}$, is a quaternionic-K\"ahler symmetric space such that the projection of $\g{s}$ on $\g{k}_j$ yields the distinguished $\g{su}_2$-factor of $\g{k}_j$.
\end{enumerate}
\end{theorem}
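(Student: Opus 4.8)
The plan is to turn condition~(i) into an orbit-equivalence problem via Lemma~\ref{lemma:normalizer}, and then to analyse that problem factor by factor using Proposition~\ref{prop:product}. By Lemma~\ref{lemma:normalizer}, (i) holds if and only if $N:=N_K^0(S)$ acts on $S(\g p)$ with the same orbits as $K$, where $K$ is the \emph{maximal} connected subgroup of $\SO(\g p)$ preserving every leaf of $\cal F_{G/K}$. The first point I would stress is that this maximal $K$ is in general strictly larger than the symmetric-pair isotropy group: by Proposition~\ref{prop:product}(a), $K=\prod_{i=1}^r K_i$ with $K_i$ maximal for $\cal F_{G_i/K_i}$; for a rank-one factor that foliation has a single leaf $S(\g p_i)$, so $K_i=\SO(\g p_i)$, whereas for an irreducible factor of rank at least two $K_i$ is the isotropy group (this is where the classifications of Eschenburg--Heintze, Bergmann and Fang--Grove--Thorbergsson enter). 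Since $\g s\cong\g{su}_2$ is simple, $\g n_{\g k}(\g s)=\g s\oplus\g z_{\g k}(\g s)$, and as $\g s$ is diagonal in $\bigoplus_i\g s_i$ we get $\g z_{\g k}(\g s)=\bigoplus_i\g z_{\g k_i}(\g s_i)$; thus $N$ is generated by the diagonal $S$ together with $\prod_i Z_{K_i}^0(\g s_i)$.

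Next I would reduce ``same orbits'' to an infinitesimal statement at a regular point $v=(v_1,\dots,v_r)$. Comparing $T_v(K\cdot v)=\bigoplus_i\bigl(\g s_i\cdot v_i+\g z_{\g k_i}(\g s_i)\cdot v_i\bigr)$ with $T_v(N\cdot v)=\{(\ad(X)\,v_i)_i:X\in\g s\}+\bigoplus_i\g z_{\g k_i}(\g s_i)\cdot v_i$ (where $X$ acts on $v_i$ through its projection to $\g s_i$), the orbits agree precisely when no dimension is lost in replacing the independent subspaces $\g s_i\cdot v_i$ by the single diagonal image of $\g s$. The relevant local invariant is $a_i:=\dim\bigl(\g s_i\cdot v_i\bmod\g z_{\g k_i}(\g s_i)\cdot v_i\bigr)$: the diagonal can restore the missing directions only if $\sum_i a_i\le 3$ and the three-dimensional image of $\g s$ surjects onto the corresponding quotient. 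For a rank-one factor $K_i=\SO(\g p_i)$ and $\g z_{\g k_i}(\g s_i)$ is the quaternionic commutant $\Sp_{m_i}$, which already acts transitively on $S(\g p_i)$; hence $\g s_i\cdot v_i\subset\g z_{\g k_i}(\g s_i)\cdot v_i$ and $a_i=0$. So only factors of rank at least two contribute, and on such a factor $a_j>0$ unless $\g s_j$ is an ideal of $\g k_j$ (equivalently $N_j=K_j$), which by Proposition~\ref{prop:sum_of_standard} and the case-by-case study of \S\ref{subsec:class_homogeneous} occurs exactly when $G_j/K_j$ is quaternionic-K\"ahler and $\g s_j$ is its distinguished $\g{su}_2$-factor.

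For (ii)$\Rightarrow$(i) I would argue directly: when all non-exceptional factors have rank one and the possible exception $G_j/K_j$ is quaternionic-K\"ahler with $\g s_j$ distinguished, then $\g s_j$ is an ideal, so the projection of $\g s$ to $\g k_j$ together with $\g z_{\g k_j}(\g s_j)$ spans $\g k_j$; the transitive commutants $\Sp_{m_i}\subset N$ sweep out each rank-one sphere independently while $N$ restricts to the full $K_j$ on $\g p_j$, and the diagonal $\g s$ links the factors but stays inside these orbits, so $N\cdot v=\prod_{i\neq j}S(\g p_i)\times(K_j\cdot v_j)=K\cdot v$. For the converse, equality of orbits forces $\sum_i a_i$ to be realised by the three-dimensional diagonal: this rules out two factors of rank at least two (there $a_j=3$ generically, so $a_1+a_2=6>3$) and forces the unique higher-rank factor to have $\g s_j$ an ideal. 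Finally, the explicit description of $\cal S_j$ in \S\ref{subsec:class_homogeneous} shows that a higher-rank factor admits such an ideal only in the quaternionic-K\"ahler case, which closes the equivalence and supplies the homogeneity assertions of Theorem~\ref{th:main1}.

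The step I expect to be the main obstacle is the bookkeeping of the diagonal linking in the converse direction: one must show that on a single higher-rank factor the image of $\g s$ genuinely recovers the quotient $\g s_j\cdot v_j\bmod\g z_{\g k_j}(\g s_j)\cdot v_j$ (so one exception is harmless), while two or more higher-rank factors necessarily overload it. This rests on knowing the precise values of $a_i$, hence on the explicit form of $\rho_*$ and $\g z_{\g k_i}(\g s_i)$ for each entry of Table~\ref{table:main1}; replacing the symmetric-pair isotropy by the genuinely maximal group $K$ on the rank-one factors is what makes these local computations uniform and is the conceptual crux of the argument.
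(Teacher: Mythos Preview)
Your overall scheme---reduce to Lemma~\ref{lemma:normalizer}, compute $N=S\cdot\prod_i Z_{K_i}^0(S_i)$, and compare tangent spaces at a regular point---matches the paper's, and the (ii)$\Rightarrow$(i) direction is essentially correct. The gap is in the converse, and it originates in your formula
\[
T_v(K\cdot v)=\bigoplus_i\bigl(\g s_i\cdot v_i+\g z_{\g k_i}(\g s_i)\cdot v_i\bigr).
\]
This is $\bigoplus_i \g n_{\g k_i}(\g s_i)\cdot v_i$, not $\bigoplus_i\g k_i\cdot v_i$, and the two agree only when $\g s_i$ is already an ideal of $\g k_i$. On a higher-rank factor where $\g s_j$ is \emph{not} an ideal, the defect $\g k_j\cdot v_j\big/\g n_{\g k_j}(\g s_j)\cdot v_j$ lies outside the span of $\g s_j\cdot v_j$ altogether, so no value of your invariant $a_j$ detects it and no diagonal linkage can repair it. Thus your dimension count, as written, is comparing $N$ with $\prod_i N_{K_i}^0(S_i)$ rather than with~$K$, and the implication ``$a_j>0$ unless $\g s_j$ is an ideal'' does not carry the intended weight.

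The paper separates the two obstructions you are conflating. First it projects orbit equality to each factor to obtain $N_{K_i}^0(S_i)\cdot v_i=K_i\cdot v_i$, and then invokes the Eschenburg--Heintze list~\cite{EH:mz} to force $N_{K_i}^0(S_i)=K_i$ (hence $\g s_i$ normal) on every factor of rank~$\geq2$; four exceptional orbit-equivalent subgroups from that list must be ruled out by hand, which your sketch omits. Only after this step is your formula for $T_v(K\cdot v)$ valid, and only then does the question reduce to ``diagonal $S$ versus $\prod S_i$''. For that second step the paper appeals to~\cite[Lemma~9.4]{FGT} rather than a bare dimension count; your claim that $a_j=3$ generically is plausible but amounts to showing that $K_j'=Z_{K_j}^0(S_j)$ is never orbit-equivalent to $K_j$ on a higher-rank quaternionic-K\"ahler factor, which is again~\cite{EH:mz} plus exclusion of its exceptions. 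So the argument can be repaired, but not without the classification input you were hoping to bypass.
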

\begin{proof}
Note that we can change the symmetric pairs 
associated to rank one factors to those of the type $(\SO_{4p_i+1},\SO_{4p_i})$, where $4p_i=\dim \g{p}_i$, if necessary. Indeed, the foliation $\cal{F}_{G/K}$ remains the same, and $\g{s}$ remains in $\cal{S}$.

First suppose (ii). Let $\g{s}_i$ be the projection of $\g{s}$ on $\g{k}_i$, for each $i=1,\dots, r$. As a closed subgroup of a compact Lie group, we have that $N_K^0(S)=Z_K^0(S)\cdot S$. It is clear also that $Z_K^0(S)=\prod_{i=1}^r Z_{K_i}^0(S_i)$. If $i\neq j$, then $Z^0_{K_i}(S_i)$ is isomorphic to $\Sp_{p_i}$. On the other hand, it is clear that $Z_{\g{k}_j}(\g{s}_j)=\g{k}_j'$, where $G_j/K_j$ is a quaternionic-K\"ahler irreducible symmetric space and $\g{k}_j=\g{k}_j'\oplus\g{su}_2$. Hence $Z^0_{K_j}(S_j)=K_j'$. All in all, $N_K^0(S)=(\prod_{i=1}^r Z_{K_i}^0(S_i))\cdot S\cong((\prod_{i\neq j}\Sp_{p_i})\times K_j')\cdot S$. Then, $N_K^0(S)$ acts on $S(\g{p})$ by the adjoint representation with the same orbits as the adjoint action of $K$ and, thus, Lemma~\ref{lemma:normalizer} guarantees the homogeneity of the projected foliation.

Now assume (i).
By Proposition~\ref{prop:product}, the projection of $\g{s}$ onto each $\g{k}_i$, $i=1,\dots,r$, is a subalgebra $\g{s}_i$ of $\g{k}_i$ isomorphic to $\g{su}_2$, and $\g q_i=\ad(\g{s}_i)\rvert_{\g{p}_i}$ is a quaternionic structure on $\g{p}_i$ preserving $\cal{F}_{G_i/K_i}$. Each subalgebra $\g{s}_i\subset\g{k}_i$ determines a Hopf fibration $\pi_i\colon S(\g{p}_i)\to \H P^{\dim\g{p}_i/4-1}$. The projection of each irreducible foliation $\cal{F}_{G_i/K_i}$, $i=1,\dots,r$, via the Hopf map $\pi_i$, is homogeneous as follows from the assumption. Then by Lemma~\ref{lemma:normalizer}, $N^0_{K_i}(S_i)$ is a connected subgroup of $K_i$ acting on $S(\g{p}_i)$ by the adjoint representation with the same orbits as $K_i$. By \cite{EH:mz}, whenever $G_i/K_i$ has rank at least two, any connected subgroup $K_i'$ of $K_i$ whose adjoint action has the same orbits as $K_i$ must be the whole $K_i$, except for a few possibilities. After excluding cases (i), (iv) with $pq$ odd, and (v) in \cite{EH:mz} (because the corresponding foliations cannot be preserved by any quaternionic structure according to our classification in \S\ref{subsec:class_homogeneous}), we are left with the following possibilities:
\begin{itemize}
\item $(G_i,K_i)=(\SO_{10},\SO_2\times\SO_8)$, $K_i'=\SO_2\times \Spin_7$, and $S_i$ is, up to conjugation, a diagonally embedded $\Sp_1\subset \SO_8\subset K_i$. Then $N^0_{K_i}(S_i)=\SO_2\times(\Sp_2\cdot\Sp_1)\neq \SO_2\times\Spin_7$.
\item $(G_i,K_i)=(\SO_{11},\SO_3\times\SO_8)$, $K_i'=\SO_3\times \Spin_7$, and $S_i$ is, up to conjugation, a diagonally embedded $\Sp_1\subset \SO_8\subset K_i$. Then $N^0_{K_i}(S_i)=\SO_3\times(\Sp_2\cdot\Sp_1)\neq \SO_3\times\Spin_7$.
\item $(G_i,K_i)=(\SU_{p+q},\mathbf{S}(\U_p\times\U_q))$ with $p\neq q$, $p$ or $q$ even, $K_i'=\SU_p\times\SU_q$, and $S_i$ is, up to conjugation, a diagonally embedded $\SU_2\subset \SU_p\subset K_i$ if $p=2p'$ is even. Then $N^0_{K_i}(S_i)=\mathbf{S}(((\Sp_{p'}\cdot\Sp_1)\cap\U_p)\times \U_q)\neq\SU_p\times\SU_q$. Similarly if $q$ is even.
\item $(G_i,K_i)=(\mathbf{E}_{6},\Spin_{10}\cdot\U_1)$, $K_i'=\Spin_{10}$, and $S_i$ is, up to conjugation, a canonically embedded $\Spin_3\subset \Spin_{10}\subset K_i$. Then $N^0_{K_i}(S_i)=(\Spin_3\times\Spin_7)\cdot \U_1\neq\Spin_{10}$.
\end{itemize}
Altogether, Lemma~\ref{lemma:normalizer}, the result in~\cite{EH:mz} and the homogeneity assumption imply that  $N^0_{K_i}(S_i)=K_i$. Hence, if $G_i/K_i$ has rank higher than one, then $S_i$ is a normal subgroup of $K_i$; since $\ad(\g{s}_i)\rvert_{\g{p}_i}$ is a quaternionic 
structure on~$\g p_i$, it follows that $G_i/K_i$ is quaternionic-K\"ahler.

Now assume that there are at least two irreducible factors of $G/K$ with rank higher than one; let them be $G_1/K_1$ and $G_2/K_2$. Decompose $K_i$ as a direct product $S_i\cdot L_i$ of normal subgroups, $i=1,2$. The group $(L_1\times L_2)\cdot S$ acts almost effectively on $\g{p}_1\oplus\g{p}_2$. Then, by 
\cite[Lemma~9.4]{FGT}, its adjoint action on $\g{p}_1\oplus\g{p}_2$ is standard, which in this case means that $(L_1\times L_2)\cdot S$ has the same orbits as $L_1\times L_2$. In particular, the adjoint actions of $K_i$ and $L_i$ on $\g{p}_i$ are orbit equivalent, for $i=1,2$. Again, by \cite{EH:mz}, both $G_1/K_1$ and $G_2/K_2$ cannot have rank higher than one, which contradicts our assumption. Therefore, at most one irreducible factor $G_j/K_j$ of $G/K$ has rank greater than one.
\end{proof}

We can now analyse the existence of inhomogeneous polar foliations 
depending on the dimension of the ambient quaternionic projective space. 
We start by restricting to the codimension one case, thus obtaining the 
characterization stated in Theorem~\ref{th:intro_codim1} in the introduction. This result turns out to be completely analogous to the one derived in~\cite[Theorem~7.4(i)]{Dom:tams} for codimension one polar foliations on complex projective spaces (cf.~\cite[Theorem~1.1]{GTY}). Although very similar arguments to the ones used in \cite{Dom:tams} would work here as well, we prefer to include a different proof based on the classification we obtained above in this paper. 

For the proofs of Theorems~\ref{th:intro_codim1} and~\ref{th:intro_primes} we will need the explicit values of the rank and dimension of the different symmetric spaces~\cite[p.~518]{Helgason}. 

\begin{proof}[Proof of Theorem~\ref{th:intro_codim1}]
We start by proving the necessity. Let $n\geq 3$ be odd. Consider the symmetric space $G/K=\SU_{n+3}/\mathbf{S}(\U_{n+1}\times \U_2)$. By our classification in~\S\ref{subsec:class_homogeneous} and by the characterization of the homogeneity of projected foliations in Theorem~\ref{th:homogeneity}, we know that there is an irreducible inhomogeneous polar foliation $\cal{G}$ on $\H P^n$ whose pull-back under the Hopf map is congruent to $\cal{F}_{G/K}$. Since $\rank G/K=2$, the codimension of $\cal{G}$ in $\H P^n$~is~one.

Next we show the other implication. First, it is well-known that $\H P^1\cong S^4$ only admits homogeneous polar foliations, see~\cite{Cartan,Th:annals} or~\cite{Radeschi}. Then let us assume that $n$ is even and let $\cal{G}$ be a codimension one polar foliation on $\H P^n$. We will show that $\cal{G}$ is homogeneous.

By Theorem~\ref{th:main2}, we know that $\cal{G}$ is the projection via some Hopf map of a homogeneous foliation $\cal{F}_{G/K}$ for some rank two symmetric space, or of an inhomogeneous FKM-foliation $\cal{F}_\cal{P}$ with $m_+\leq m_-$, or we are in the open case $n=7$. Actually, we cannot be in this last case, since we are assuming that $n$ is even.

Assume first $\cal{G}$ is the projection of a foliation $\cal{F}_{G/K}$ for some rank two symmetric space such that $N_\cal{S}\geq 1$. According to our classification and to the dimensions and ranks of the symmetric spaces, since  $\dim G/K = 4(n+1)$ with $n+1$ odd, the only possibilities for $G/K$ are $\SU_{n+3}/\mathbf{S}(\U_{n+1}\times \U_2)$ or a reducible symmetric space $S^{4r}\times S^{4(n-r+1)}$ for some $r=1,\dots, n$. But since in all these cases we have $N_\cal{S}=1$, Theorem~\ref{th:homogeneity} guarantees that $\cal{G}=\pi(\cal{F}_{G/K})$ is homogeneous.

Next assume $\cal{G}$ is the projection of an inhomogeneous FKM-foliation $\cal{F}_\cal{P}$ with $m_+\leq m_-$. Since we take $\cal{F}_\cal{P}$ not homogeneous, we can assume that $m=m_+\geq 3$ (see~\cite[\S4.4]{FKM}). It turns out that, in this case, $m_++m_-\equiv 3\,(\mathrm{mod}\,4)$, as follows from the facts stated at the beginning of \S\ref{subsec:FKM}. However, since the regular leaves of $\cal{F}_\cal{P}$ are hypersurfaces in $S^{4n+3}$ with four principal curvatures, two of them with multiplicity $m_+$ and the other two with multiplicity $m_-$, we must also have $4(n+1)=2(m_++m_- +1)\equiv0\,(\mathrm{mod}\,8)$. This is a contradiction to the assumption that $n$ is even. Hence $\cal{G}$ cannot be the projection of an inhomogeneous FKM-foliation. This concludes the proof.
\end{proof}

Finally we prove Theorem~\ref{th:intro_primes}, which provides a characterization of those dimensions $n$ for which $\H P^n$ admits some irreducible inhomogeneous polar foliation of some codimension. Intriguingly, and similarly as for $\C P^n$, prime numbers appear again in this characterization. However, the proof of the analogous result for $\C P^n$  in~\cite{Dom:tams} does not carry over directly to the quaternionic setting.

\begin{proof}[Proof of Theorem~\ref{th:intro_primes}]
Suppose $n+1$ is not prime. Write $n+1=pq$ for integers $p$, $q\geq 2$. Consider the irreducible symmetric space $G/K=\Sp_{p+q}/\Sp_p\times\Sp_q$, which is not quaternionic-K\"ahler. By \S\ref{subsec:class_homogeneous} we know that $N_\cal{S}\geq 1$ for this symmetric space. Then, there is, up to congruence in $\H P^n$, at least one polar foliation $\cal{G}$ on $\H P^n$ whose pull-back under the Hopf map is congruent to $\cal{F}_{G/K}$. This $\cal{G}$ is irreducible, and cannot be homogeneous due to Theorem~\ref{th:homogeneity}. 

Now let $n+1$ be prime and let $\cal{G}$ be an irreducible polar foliation on $\H P^n$. If $\cal{G}$ has codimension one, then we know by Theorem~\ref{th:intro_codim1} that $n$ is odd and $n\geq 3$, or $\cal{G}$ is homogeneous. The first case is impossible because we are assuming that $n+1$ is prime. Hence, $\cal{G}$ is homogeneous.

Let us then assume that $\cal{G}$ has codimension at least two. Since it is irreducible, it must be obtained by projecting some homogeneous polar foliation $\cal{F}_{G/K}$ for some irreducible symmetric space $G/K$ such that $\rank G/K\geq 3$ and $N_\cal{S}\geq 1$. Moreover, $\dim G/K=4(n+1)$ is four times a prime number. By appealing again to \S\ref{subsec:class_homogeneous} and to the dimensions and ranks of the symmetric spaces, we are left with the following possibilities for $G/K$: $\SO_{n+5}/\SO_{n+1}\times \SO_4$ and $\mathbf{F}_4/\Sp_3\cdot\SU_2$. But these symmetric spaces are quaternionic-K\"ahler and satisfy $N_\cal{S}=1$. Hence, by Theorem~\ref{th:homogeneity}, their projections must be homogeneous. Thus, $\cal{G}$ is homogeneous.
\end{proof}


\end{document}